\newtheoremstyle{theorem}
  {10pt}          
  {10pt}  
  {\sl}  
 {}
  {\bf}  
  {. }    
  { }    
  {}     
\theoremstyle{theorem}
\newtheorem{theorem}{Theorem}[section]
\newtheorem{corollary}{Corollary}[section]
\newtheorem{definition}{Definition}[section]
\newtheorem{remark}{Remark}[section]
\numberwithin{equation}{section}
\newtheoremstyle{defi}
{10pt}  
{10pt}  
{\rm}   
{}      
{\bf}   
{. }    
{ }     
{}      
\theoremstyle{defi}
\definecolor{mypink}{RGB}{219, 48, 122}
\definecolor{myblue}{RGB}{0, 0, 122}
\begin{document}

\title{Relative entropy inequality for capillary fluids with density dependent viscosity and applications}

\author{Matteo Caggio$^{1}$ \ \ Donatella Donatelli$^{2}$
  \\
  {\small  1. Institute of Mathematics of the Academy of Sciences of the Czech Republic,} \\
  {\small \v Zitn\' a 25, 11567, Praha 1, Czech Republic}\\
  {\small  2. Department of Information Engineering, Computer Science and Mathematics,}\\
  {\small University of L'Aquila}\\
  {\small via Vetoio, Coppito - 67100 L' Aquila, Italy}\\
  {\small caggio@math.cas.cz}\\
  {\small donatella.donatelli@univaq.it}
\date{}
}

\maketitle
\begin{abstract}
    \noindent
	We derive a relative entropy inequality for capillary compressible fluids with density dependent viscosity.
	Applications in the context of weak-strong uniqueness analysis, pressureless fluids and high-Mach number flows are presented.
\end{abstract}
\smallskip

\noindent\textbf{Key words}: barotropic compressible fluids, density dependent viscosity, Navier-Stokes-Korteweg model, capillary fluids, weak-strong uniqueness.

\tableofcontents{}

\newpage{}

\section{Introduction}
\noindent
We consider the following compressible Navier-Stokes-Korteweg system in $(0,T) \times \mathbb{T}^3$:
\begin{equation} \label{cont_cap}
    \partial_t \varrho + \textrm{div} (\varrho  u)=0
\end{equation}
\begin{equation*}
\partial_t (\varrho u) + \textrm{div} (\varrho u \otimes u) + \nabla p(\varrho) - 2\nu \textrm{div} (\varrho Du)
\end{equation*}
\begin{equation} \label{mom_cap}
    - 2\kappa^2 \left[ 
    \nabla (\varrho  \Delta \varrho) + \frac{1}{2} \nabla ( |\nabla \varrho |^2)
    - 4 \textrm{div}(\varrho \nabla \sqrt{\varrho} \otimes \nabla \sqrt{\varrho})
    \right]=0,
\end{equation}
with initial data
\begin{equation} \label{idata}
    \varrho(0,x) = \varrho_0(x), \ \ \ \ \varrho  u (0,x) = \varrho_0(x) u_0(x).
\end{equation}
Here, $\mathbb{T}^3$ denotes the three-dimensional flat torus $[0,1]^3$, the function $\varrho=\varrho (x,t)$ represents the density of the fluid and $u=u(x,t)$ the three-dimensional velocity field, functions of the spatial position $x \in \mathbb{T}^3$ and the time $t$. The pressure $p$ is of the type  $p(\varrho) = \varrho ^\gamma$ with $\gamma > 1$ physical constant. System (\ref{cont_cap}) - (\ref{idata}) describes compressible viscous capillary fluids. 

Capillary fluids belong to the more general class of Korteweg fluids whose form is determined by the following system of equations,
\begin{equation} \label{cont_nsk}
    \partial_t \varrho+ \textrm{div} (\varrho  u) = 0,
\end{equation}
\begin{equation} \label{mom_nsk}
    \partial_t (\varrho u) + \textrm{div} (\varrho u \otimes u) + \nabla p(\varrho ) = 2 \textrm{div} \mathbb{S} + 2\kappa^2 \textrm{div} \mathbb{K}.
\end{equation} 
The quantity 
$\mathbb{S}$ is the viscous stress tensor given by
\begin{equation} \label{S}
    \mathbb{S} = \mu(\varrho)Du+\lambda(\varrho)\textrm{div}u\mathbb{I}, \quad Du=\frac{\nabla u+\nabla^{t}u}{2}
\end{equation}
and
\begin{equation} \label{K}
    \mathbb{K}
    =
    \left(\varrho \textrm{div}(k(\varrho)\nabla\varrho)-\frac{1}{2}(\varrho k'(\varrho)-k(\varrho ))|\nabla\varrho|^2\right)\mathbb{I}-
\left(k(\varrho )\nabla\varrho \otimes\nabla\varrho\right).
\end{equation}
is the Korteweg tensor with $k(\varrho)$ the capillarity coefficient. The viscosity coefficients $\mu(\varrho)$ and $\lambda(\varrho)$ satisfy
$$
\mu(\varrho)\geq0, \ \ \mu(\varrho)+ 3\lambda(\varrho)\geq0.
$$
The system (\ref{cont_cap}) - (\ref{mom_cap}) can be recast from (\ref{cont_nsk}) - (\ref{K}) by choosing
$\lambda(\varrho)=0$ and $k(\varrho)=\kappa^2$, with $\kappa > 0$ constant capillarity coefficient.

The aim of this paper is to prove a relative entropy inequality for the system (\ref{cont_cap}) - (\ref{mom_cap}) and to discuss  some applications. 
Relative entropies are non-negative quantities that provide a sort of distance between two solutions of the same problem/pde, one of which typically enjoys some very smooth regularity properties. The method of relative entropy or modulated energy was introduced in the pioneering paper by Dafermos \cite{Daf} and since then it has  been  applied to partial differential equations of different types and has become a successful tool in the study of weak strong uniqueness properties (see \cite{FeNoJi}, \cite{DoFeMa}) and in scaling limit analysis such as incompressible limits, vanishing viscosity limits, quasi-neutral limits, high friction limits,  see for example \cite{CaDoNeSun}, \cite{DoFe}, \cite{CianLat} and references therein.
In this paper we will focus on the following two applications for capillary fluids: weak-strong uniqueness, high Mach number limit.

Weak-strong uniqueness means that a weak and strong solution emanating from the same initial data coincide as long as the latter exists (see Theorem \ref{main-r1}).
Applications related to this analysis will concern also  pressureless capillary flows. In particular, we will consider the following pressureless system, 
\begin{equation} \label{cont_cap-press}
    \partial_t \varrho + \textrm{div} (\varrho u) = 0,
\end{equation}
\begin{equation*}
\partial_t (\varrho u) + \textrm{div} (\varrho u \otimes u) - 2\nu \textrm{div}(\varrho Du)
\end{equation*}
\begin{equation} \label{mom_cap-press}
    - 2\kappa^2 \left[ 
    \nabla (\varrho \Delta \varrho) + \frac{1}{2} \nabla ( |\nabla \varrho|^2)
    - 4 \textrm{div}(\varrho \nabla \sqrt{\varrho} \otimes \nabla \sqrt{\varrho})
    \right]=0;
\end{equation}
 obtained by the re-scaled version of the  system (\ref{cont_cap}) - (\ref{mom_cap}) in terms of the Mach number $\mathcal{M}a$, namely 
\begin{equation} \label{cont_cap-rs}
    \partial_t \varrho_{\varepsilon} + \textrm{div} (\varrho_{\varepsilon} u_{\varepsilon}) = 0,
\end{equation}
\begin{equation*}
\partial_t (\varrho_{\varepsilon} u_{\varepsilon}) + \textrm{div} (\varrho_{\varepsilon} u_{\varepsilon} \otimes u_{\varepsilon}) + \varepsilon^2 \nabla p(\varrho_{\varepsilon}) - 2 \textrm{div} (\mu(\varrho_{\varepsilon}) Du_{\varepsilon})
\end{equation*}
\begin{equation} \label{mom_cap-rs}
    - 2\kappa^2 \left[ 
    \nabla (\varrho_{\varepsilon} \Delta \varrho_{\varepsilon}) + \frac{1}{2} \nabla ( |\nabla \varrho_{\varepsilon}|^2)
    - 4 \textrm{div}(\varrho_{\varepsilon} \nabla \sqrt{\varrho_{\varepsilon}} \otimes \nabla \sqrt{\varrho_{\varepsilon}})
    \right]=0,
\end{equation}
where
\begin{equation*}
    \varepsilon^2 = \frac{1}{(\mathcal{M}a)^2},
\end{equation*}
and by performing the high Mach number limit $\mathcal{M}a\to \infty$.

For the system (\ref{cont_cap-press}) - (\ref{mom_cap-press}) we will discuss the weak-strong uniqueness property as a consequence of the analysis performed for the system with the presence of the pressure (see Theorem \ref{main-r2}). Moreover, we will also take into account a particular case where irrotational initial data for the velocity field are considered (see Corollary \ref{cor}). This result is somehow relevant since, as we will see in the Section \ref{WSpl}, we obtain a weak-strong uniqueness  results by considering an initial density which is merely $L^{1}$ in contrast with the higher regularity usually required in this framework.

The present analysis could be seen both as a natural continuation of the previous analysis in \cite{CaDo}, in which the existence of weak solutions and related properties for the pressureless system, the high Mach number limit have been proved, and an extension  of the weak-strong uniqueness result developed in \cite{BGL} for quantum fluids. The interest in the weak-strong uniqueness property relates to the fact that is new both in the context of relative energy estimates for the high-Mach number or pressureless regime and in the context of the existence theory for capillary fluids with density dependent viscosity. Indeed,  while in \cite{CaDo} (see also \cite{BrDeLi}) the definition of the weak solution is based on a particular choice of the test function that prevents the weak-strong uniqueness analysis in the usual sense, the recent advancement in the theory concerning the existence of the weak solutions (see Section 2.2 below), allows the study of the weak-strong uniqueness property as commonly known (see, for example, \cite{FeNoJi}).

A further application of the relative entropy inequality for the system (\ref{cont_cap-rs}) - (\ref{mom_cap-rs}) concerns the rigorous proof of the high Mach number limit. Indeed we will study the convergence of a weak solution of (\ref{cont_cap-rs}) - (\ref{mom_cap-rs}) towards the strong solution of the corresponding pressureless system in the limit as $\varepsilon \to 0$ (see Theorem \ref{main-r3}). This result can be considered as the completion of the analysis of the high Mach number limit  for capillary fluids started in \cite{CaDo} where the authors rigorously proved a weak-weak convergence result in the high compressible regime.

This manuscript is organised as follows. In Section \ref{pre} we introduce the notation and we discuss the existence of a weak solution to the system (\ref{cont_cap}) - (\ref{idata}). Section \ref{rei-se} is devoted to the derivation of the relative entropy inequality through the use of the
so-called ``augmented velocity'' version of the system (\ref{cont_cap}) - (\ref{mom_cap}).
In Section \ref{app} we perform the weak-strong uniqueness analysis for the system with and without the presence of the pressure term.  Section \ref{HM} is devoted to the proof of the high Mach number limit by means of the relative entropy.

\section{Preliminaries} \label{pre}

\subsection{Notation}
\noindent
We denote by $C_c^\infty([0,T)\times\mathbb{T}^3;\mathbb{R}^d)$ the space of periodic smooth functions with values in $\mathbb{R}^d$ with compact support in $[0,T)\times\mathbb{T}^3$ and by $L^p(\mathbb{T}^3)$ the standard Lebesgue spaces. The Sobolev spaces of functions with $k$ distributional derivative in $L^p(\mathbb{T}^3)$ are $W^{k,p}(\mathbb{T}^3)$. In the case $p=2$, $W^{k,p}(\mathbb{T}^3) = H^k(\mathbb{T}^3)$. The Bochner spaces for time-dependent functions with values in Banach spaces $X$ are denoted by $L^p(0,T;X)$ and $W^{k,p}(0,T;X)$. The space $C(0,T;X_w)$ is the space of continuous functions endowed with the weak topology. The quantities $Du$ and $Au$ are the symmetric and anti-symmetric part of the gradient $\nabla u$, respectively.
As a matter of notation, we always write $(\nabla u)^t := \nabla^t u$, meaning the transpose matrix. The above notation holds both for the velocity field $u$ and for the other velocity fields that we will introduce in the analysis. 

\subsection{Existence of weak solutions}
\noindent
 In \cite{BrDeLi} the authors prove the existence of arbitrarily large, global-in-time finite energy weak solutions to (\ref{cont_cap}) - (\ref{mom_cap}) by considering test functions of the form $\varrho \varphi$, with $\varphi$ smooth and compactly supported. As stressed in \cite{AnSp}, this is somehow equivalent to considering test functions that are supported where the mass density is positive. In \cite{AnSp}, the authors improve the result in \cite{BrDeLi} by removing the requirement on the test functions and by considering a more natural definition of weak solutions (see Definition 2.1 in \cite{AnSp}). The result is obtained by considering a suitable approximate system and by the use of truncation arguments in order to obtain the sufficient convergence towards global-in-time finite energy weak solutions to (\ref{cont_cap}) - (\ref{idata}).

In the spirit of \cite{AnSp}, we introduce the definition of a weak solution for the system (\ref{cont_cap}) - (\ref{idata}):

\begin{definition} \label{weak-def}
A triple $(\varrho,u,\mathcal{T})$, with $\varrho \geq 0$, is said to be a weak solution to (\ref{cont_cap}) - (\ref{mom_cap}) with initial data (\ref{idata}) if the following conditions are satisfied:

\bigskip
(1) Integrability conditions:
\begin{equation*}
    \varrho \in L^\infty(0,T;H^1(\mathbb{T}^3))\cap
    L^2(0,T;L^2(\mathbb{T}^3)), \ \ \ \ 
    \sqrt{\varrho} u \in L^\infty(0,T;L^2(\mathbb{T}^3)),
\end{equation*}
\begin{equation*}
    \varrho^{\frac{\gamma}{2}}\in
    L^\infty(0,T;L^2(\mathbb{T}^3))\cap
    L^2(0,T;H^1(\mathbb{T}^3)), \ \ \ \ 
    \nabla \sqrt{\varrho} \in
    L^\infty(0,T;L^2(\mathbb{T}^3)),
\end{equation*}
\begin{equation*}
    \mathcal{T} \in 
    L^2(0,T;L^2(\mathbb{T}^3)), \ \ \ \ 
    \varrho u \in C([0,T); L_w^{\frac{3}{2}}(\mathbb{T}^3)).
\end{equation*}

\bigskip
(2) Continuity equation:

For any $\varphi \in C_c^\infty([0,T)\times\mathbb{T}^3;\mathbb{R})$,
\begin{equation} \label{cont-wf}
\int_{\mathbb{T}^3}
\varrho_0 \varphi_0(x) dx + \int_0^T \int_{\mathbb{T}^3}
\varrho \varphi_t + \sqrt{\varrho}\sqrt{\varrho} u \nabla \varphi dx dt = 0.
\end{equation}

\bigskip
(3) Momentum equation:

For any fixed $l=1,2,3$ and $\phi \in C_c^\infty([0,T)\times\mathbb{T}^3;\mathbb{R}^3),$

\begin{equation*} 
    \int_{\mathbb{T}^3} \varrho_0 u_0 \phi(0) dx 
    + \int_0^T \int_{\mathbb{T}^3}
    \sqrt{\varrho}\left(\sqrt{\varrho}u^l\right) \phi_t \; dx dt
    + \int_0^T \int_{\mathbb{T}^3}
    \sqrt{\varrho}u^l \sqrt{\varrho}u \cdot \nabla \phi \; dx dt
\end{equation*}
\begin{equation*}
    - 2\nu \int_0^T \int_{\mathbb{T}^3} 
    \sqrt{\varrho} \mathcal{S}_{\cdot,l}
    \nabla \phi \; dx dt
    - 2 \int_0^T \int_{\mathbb{T}^3}
    \nabla \varrho^{\frac{\gamma}{2}} \varrho^{\frac{\gamma}{2}} \cdot \phi \; dx dt
    - 2\kappa^2 \int_0^T \int_{\mathbb{T}^3} 
    \nabla_l \varrho \Delta \varrho \phi \; dx dt
\end{equation*}
\begin{equation} \label{mom-wf}
    - 2\kappa^2 \int_0^T \int_{\mathbb{T}^3}
    \varrho \Delta \varrho \nabla_l \phi \; dx dt = 0.
\end{equation}

\bigskip
(4) Dissipation:

For any $\xi \in C_c^\infty([0,T)\times\mathbb{T}^3;\mathbb{R})$,

\begin{equation} \label{diss}
    \int_0^T \int_{\mathbb{T}^3}
    \sqrt{\varrho} \mathcal{T}_{i,j} \xi \; dxdt
    =
    - \int_0^T \int_{\mathbb{T}^3}
    \varrho u_i \nabla_j \xi \; dxdt
    - \int_0^T \int_{\mathbb{T}^3} 
    2 \sqrt{\varrho} u_i \otimes \nabla_j \sqrt{\varrho} \xi \; dxdt.
\end{equation}

\bigskip
(5) Energy inequality:

The following energy inequality holds:
\begin{equation*} 
    \sup_{t \in (0,T)} \int_{\mathbb{T}^3}
    \left[
    \frac{\varrho(t,x)|u(t,x)|^2}{2} + H(\varrho)
    + \frac{|\nabla \varrho(t,x)|^2}{2}
    \right] dx
    + 2\nu \int_0^T \int_{\mathbb{T}^3}
    |\mathcal{S}(u)(t,x)|^2 dx
\end{equation*}
\begin{equation} \label{ei-wf}
    \leq \int_{\mathbb{T}^3} 
    \left[ \varrho_0(x) |u_0(x)|^2 + H(\varrho_0) + \frac{|\nabla \varrho_0(x)|^2}{2} \right]dx.  
\end{equation}
\end{definition}

\noindent
Here, $H(\varrho)$ is such that
\begin{equation}
H''(\varrho) = \frac{p'(\varrho)}{\varrho}
\label{hpressure}
\end{equation}
and $\mathcal{S}(u)$ is the symmetric part of the tensor $\mathcal{T}(u)$ defined by
\begin{equation} \label{TT}
    \sqrt{\varrho}\mathcal{T}^{jk}(u)
    = \partial_j (\varrho u_k) - 2 \partial_j \sqrt{\varrho} (\sqrt{\varrho} u_k).
\end{equation}

\begin{remark}
As stressed in \cite{AnSp}, for smooth solutions for the system (\ref{cont_cap}) - (\ref{mom_cap}) the energy inequality (in fact an equality) reads 
\begin{equation} \label{ee-sm}
    E(T) + \int_0^T \int_{\mathbb{T}^3} \varrho|Du|^2 dxdt \leq E(0)
\end{equation}
where
\begin{equation*}
    E(T) = \int_{\mathbb{T}^3}
    \left[
    \frac{\varrho|u|^2}{2}
    + H(\varrho) + 
    \frac{|\nabla \varrho|^2}{2}
    \right] dx.
\end{equation*}
Indeed, it is not clear whether arbitrary finite energy weak solutions satisfy inequality (\ref{ee-sm}) because we are not able to conclude that the weak-limit in $L_{t,x}^2$ of $\sqrt{\varrho_n}Du_n$ is $\sqrt{\varrho}Du$. In general, we can only say that 
\begin{equation*}
    \sqrt{\varrho_n}Du_n \rightharpoonup \mathcal{S} \ \ \text{in} \ \ L_{t,x}^2.
\end{equation*}
hence, the viscous term $\sqrt{\varrho} Du$ has to be understood as $\sqrt{\varrho}\mathcal{S}$.
\end{remark}

\noindent
In \cite{AnSp}, the following existence result has been proved.
\begin{theorem} \label{main-ex}
Assume $\varrho_0$ and $\varrho_0 u_0$ satisfy
\begin{equation} \label{id-th-1}
    \varrho_0 \geq 0, \ \ \ \ \varrho_0 \in L^1 \cap L^\gamma (\mathbb{T}^3), \ \ \ \ 
    \nabla \sqrt{\varrho_0} \in L^2 (\mathbb{T}^3), \ \ \ \ \log \varrho_0 \in L^1(\mathbb{T}^3),
\end{equation}
\begin{equation} \label{id-th-2}
    \sqrt{\varrho_0} u_0 \in L^2(\mathbb{T}^3), \ \ \ \ 
    \varrho_0 u_0 \in L^p(\mathbb{T}^3) \ \ \text{with} \ \ p<2.
\end{equation}
Then, there exists at least a global weak solution $(\varrho, u, \mathcal{T})$ of (\ref{cont_cap}) - (\ref{mom_cap}) in the sense of Definition \ref{weak-def}. 
\end{theorem}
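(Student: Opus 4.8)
The plan is to realize the weak solution as a limit of more regular approximate solutions, driving the construction with the two fundamental a priori estimates available for this system — the physical energy and the Bresch--Desjardins (BD) entropy — supplemented by a Mellet--Vasseur type bound on the kinetic energy. Since the viscosity $\mu(\varrho)=\nu\varrho$ degenerates on the vacuum set $\{\varrho=0\}$, where the velocity $u$ need not be defined, the whole argument is carried out in terms of the renormalized quantities $\sqrt{\varrho}$, the momentum $m=\sqrt{\varrho}\,(\sqrt{\varrho}u)$, and the tensor $\mathcal{T}$ of (\ref{TT}), which remain meaningful up to the vacuum. Concretely, I would first introduce a regularized system — mollifying the initial data so they are smooth and bounded away from $0$, and adding a small higher-order diffusion and/or a drag term together with a density cut-off — and solve it by a Faedo--Galerkin scheme combined with a fixed point argument, producing smooth approximate solutions $(\varrho_n,u_n)$ on $(0,T)$. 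The regularizing terms must be chosen compatibly with the BD structure so that the entropy estimate survives the passage to the limit; this is the delicate modeling choice.

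Next I would derive the uniform bounds. The energy inequality (\ref{ei-wf}) controls $\sqrt{\varrho_n}u_n$, $\nabla\sqrt{\varrho_n}$ (equivalently $\varrho_n\in L^\infty_tH^1_x$, since the capillary contribution $|\nabla\varrho_n|^2/2$ enters the energy) and $\varrho_n^{\gamma/2}$ in $L^\infty_tL^2_x$, and gives the dissipation bound on $\mathcal{S}(u_n)$ in $L^2_{t,x}$. The BD entropy, available precisely because $\mu(\varrho)=\nu\varrho$, then supplies the extra regularity $\varrho_n^{\gamma/2}\in L^2_tH^1_x$ and the additional control on the derivatives of $\varrho_n$ needed to give meaning to the third-order capillary terms $\nabla\varrho\,\Delta\varrho$ appearing in (\ref{mom-wf}). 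With these bounds the tensor $\mathcal{T}_n$ is bounded in $L^2_{t,x}$, and the momentum equation yields enough control on $\partial_t(\varrho_n u_n)$ to obtain the time-continuity $\varrho u\in C([0,T);L^{3/2}_w)$.

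The third step is the limit passage. Using the continuity equation together with the $H^1_x$ bound, Aubin--Lions gives strong convergence $\sqrt{\varrho_n}\to\sqrt{\varrho}$ and $\varrho_n\to\varrho$ in suitable norms. The crucial ingredient is the strong convergence of the momentum: a Mellet--Vasseur estimate (a bound on $\varrho_n|u_n|^2\log(1+|u_n|^2)$) yields equi-integrability of $\varrho_n|u_n|^2$ and thereby upgrades weak to strong convergence of $\sqrt{\varrho_n}u_n$ in $L^2_{t,x}$. This allows me to pass to the limit in the convective term $\sqrt{\varrho}u\otimes\sqrt{\varrho}u$, in the pressure term $\nabla\varrho^{\gamma/2}\,\varrho^{\gamma/2}$, and — after rewriting them through $\sqrt{\varrho}$, $\nabla\sqrt{\varrho}$ and $\nabla^2\sqrt{\varrho}$ — in the capillary terms. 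For the viscous term only weak convergence is at hand, so, as noted in the Remark above, $\mathcal{T}$ is defined as the weak $L^2_{t,x}$ limit of $\sqrt{\varrho_n}\mathcal{T}_n$ and $\sqrt{\varrho}Du$ is interpreted as $\sqrt{\varrho}\mathcal{S}$; the defect relation (\ref{diss}) and the energy inequality (\ref{ei-wf}) are then recovered by weak lower semicontinuity of the convex dissipation functional.

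The hard part will be the interaction of the vacuum degeneracy with the compactness argument: because $u_n$ is uncontrolled where $\varrho_n$ vanishes, one cannot manipulate $u_n$ directly and must transfer every estimate to $\sqrt{\varrho_n}u_n$ and $\mathcal{T}_n$, and the passage to the limit in the nonlinear products requires the Mellet--Vasseur equi-integrability together with the careful truncation arguments developed in \cite{AnSp}. A secondary but technically heavy point is designing the approximation so that both the energy and the BD entropy are inherited uniformly, since these two estimates constrain the admissible regularizing terms in competing ways; this is exactly the step where the construction of \cite{AnSp}, via an approximate system and truncations, is essential.
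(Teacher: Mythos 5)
First, a contextual remark: the paper does not actually prove Theorem \ref{main-ex} --- it is quoted from \cite{AnSp}, and the paper only summarizes the strategy as ``a suitable approximate system and the use of truncation arguments.'' So your proposal can only be measured against the proof in that reference. Your scaffolding is largely the right one and matches it in outline: regularized approximate solutions built so that the BD entropy structure survives, uniform bounds from the energy inequality and the BD entropy (in particular $\nabla\sqrt{\varrho_n}$, $\varrho_n^{\gamma/2}\in L^2_tH^1_x$, and crucially the $L^2_{t,x}$ bound on $\Delta\varrho_n$ coming from capillarity, which gives meaning to the third-order terms), Aubin--Lions compactness for the density, and the interpretation of the viscous flux through the tensor $\mathcal{T}$ defined as a weak $L^2_{t,x}$ limit, with (\ref{diss}) and (\ref{ei-wf}) recovered by lower semicontinuity.

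The genuine gap is the compactness mechanism for the momentum. You hang the strong $L^2_{t,x}$ convergence of $\sqrt{\varrho_n}u_n$ on a Mellet--Vasseur bound on $\varrho_n|u_n|^2\log(1+|u_n|^2)$, but for the Korteweg system this estimate is not available: deriving it requires testing the momentum equation against $u\,(1+\log(1+|u|^2))$, and the third-order capillary term $2\kappa^2\varrho\nabla\Delta\varrho$ then produces contributions that the energy and BD bounds do not control --- the same obstruction as in the quantum Navier--Stokes case. This is precisely the point of \cite{AnSp}: their construction is designed to \emph{avoid} Mellet--Vasseur altogether, obtaining a.e. convergence and equi-integrability of the momentum from truncations $T_k(u_n)$ of the velocity combined with the extra regularity that the capillarity supplies through the BD entropy ($\varrho_n$ bounded in $L^2_tH^2_x$, $\nabla\varrho_n^{\gamma/2}\in L^2_{t,x}$), together with a careful definition of $u=m/\varrho$ off the vacuum set. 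Your text hedges by also invoking ``the truncation arguments of \cite{AnSp},'' but as written the MV step is the load-bearing one and it would fail; replacing it by the truncation-based compactness is not a cosmetic fix but the core of the proof. Relatedly, you never use the hypotheses $\log\varrho_0\in L^1(\mathbb{T}^3)$ (needed to initialize the BD entropy) and $\varrho_0u_0\in L^p(\mathbb{T}^3)$, $p<2$ (needed for the time continuity $\varrho u\in C([0,T);L^{3/2}_w)$ and to identify the limit momentum near vacuum), which signals that the proposed route diverges from the one that actually closes.
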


\begin{remark}
In the case of the pressureless system  (\ref{cont_cap-press}) - (\ref{mom_cap-press}) an existence results for global in time weak solutions in the framework of  \cite{BrDeLi} has been proved in \cite{CaDo}, see Theorem 2.4. 
Nevertheless it is possible to show that Theorem \ref{main-ex} holds also for the pressureless system (\ref{cont_cap-press}) - (\ref{mom_cap-press}). The main difference in the two results lies in the definition of weak solutions. Indeed in \cite{CaDo}, the used test functions 
are of the form $\varrho\varphi$ basically they are supported on the sets of positive density. In this paper we will work with weak solutions in the sense of  the Definition \ref{weak-def} suitably modified for the pressureless system.
\end{remark}

\section{Relative entropy inequality} \label{rei-se}
\noindent
This section is devoted to the study of a relative entropy inequality. For density dependent viscosity fluids this analysis requires the introduction of an ``augmented velocity'' system due to the fact that an $H^1$ bound for the velocity is no longer available because of the density dependent viscosity. Consequently, standard application of the Korn's inequality in the usual weak-strong uniqueness framework is not possible.

A relative entropy inequality for compressible Navier-Stokes equations with density-dependent viscosity has been introduced in \cite{BNV-1} and \cite{BNV-2} in the framework of $k$-entropy solutions (see \cite{BrDeZa}). 
In \cite{BNV-2}, the authors study a weak-strong uniqueness property together with some applications.
Later, in \cite{BGL}, the authors consider weak solutions for quantum fluids and a weak-strong uniqueness analysis is presented. Differently from \cite{BNV-1}, \cite{BNV-2}, they define a relative entropy functional with the presence of viscous terms (see Section 4.1 in \cite{BGL}).

Differently from the above analysis, we define a relative entropy functional for the ``augmented velocity'' system that contains the energy contribution coming from the capillarity. Indeed, while in the quantum case, the energy contribution coming from the quantum term could be absorbed in the ``augmented velocity'' and is not present in the definition of the energy functional (see, for example, Section 3 in \cite{CaDo}), this is not the case for capillary fluids.
Moreover, in the same spirit of \cite{BNV-1} and \cite{BNV-2}, the viscous terms are not present in our definition of the relative entropy functional.


\subsection{``Augmented" system}

\noindent
In order to derive an augmented version of the system (\ref{cont_cap}) - (\ref{mom_cap}), we set the following notations for a general $\mu(\varrho)$ that will be specified later:
$$
\widetilde{\phi}'(\varrho)=\frac{\mu'(\varrho)}{\varrho}, \ \ \ \ \nabla \mu(\varrho) = \varrho \nabla \widetilde{\phi}(\varrho).
$$
\noindent
We multiply the continuity equation by $\mu'(\varrho)$ and we write the corresponding equation for $\mu(\varrho)$. We have,
\begin{equation} \label{cont-mu}
    \partial_t \mu(\varrho) + \text{div}(\mu(\varrho)u)=0. 
\end{equation}
We differentiate (\ref{cont-mu}) respect to space and observing that $\nabla \mu(\varrho) = \varrho \nabla \widetilde{\phi}(\varrho)$, we obtain
\begin{equation} \label{cont-phi}
    \partial_t (\varrho \nabla \widetilde{\phi} (\varrho)) + \text{div}(\varrho u \otimes \nabla \widetilde{\phi}(\varrho))
    + \text{div}(\mu(\varrho)\nabla^t u)=0. 
\end{equation}
In the same spirit of \cite{BrDeZa}, we define the so called ``effective velocity''
\begin{equation*}
    w = u + \nabla \widetilde{\phi} (\varrho).
\end{equation*}
From the momentum equation (\ref{mom_cap}) and \eqref{cont-phi}, we get
$$
\partial_t (\varrho w) + \textrm{div} (\varrho w \otimes u) + \nabla p(\varrho) - \textrm{div} (\mu(\varrho) Du)
- \textrm{div} (\mu(\varrho) Au)
$$
\begin{equation} \label{mom_cap_2}
    - 2\kappa^2 \left(
    \nabla (\varrho \Delta \varrho) + \frac{1}{2} \nabla ( |\nabla \varrho|^2)
    - 4 \textrm{div}(\varrho \nabla \sqrt{\varrho} \otimes \nabla \sqrt{\varrho})
    \right)=0.
\end{equation}
The relation above could be rewritten as follows
$$
\partial_t (\varrho w) + \textrm{div} (\varrho w \otimes u) + \nabla p(\varrho) - \textrm{div} (\mu(\varrho) Dw)
- \textrm{div} (\mu(\varrho) Aw)
+ \textrm{div} (\mu(\varrho) \nabla \nabla \widetilde{\phi}(\varrho))
$$
\begin{equation} \label{mom_cap_3}
    - 2\kappa^2 \left(
    \nabla (\varrho \Delta \varrho) + \frac{1}{2} \nabla ( |\nabla \varrho|^2)
    - 4 \textrm{div}(\varrho \nabla \sqrt{\varrho} \otimes \nabla \sqrt{\varrho})
    \right)=0.
\end{equation}
Now, for $\mu(\varrho) = 2 \nu \varrho$ we define $v = \nabla \widetilde{\phi} (\varrho)=2\nu\nabla\log \varrho$ and we obtain the ``augmented velocity''  version of the Navier-Stokes-Korteweg system
\begin{equation} \label{ag_1.0}
    \partial_t \varrho + \textrm{div} (\varrho u) = 0,
\end{equation}
$$
\partial_t (\varrho w) + \textrm{div}(\varrho w \otimes u) + \nabla p(\varrho)
    - 2\nu \textrm{div}(\varrho \nabla w) + 2\nu \textrm{div} (\varrho \nabla v)
$$
\begin{equation} \label{ag_2.0}
    - 2\kappa^2 \left(
    \nabla (\varrho \Delta \varrho) + \frac{1}{2} \nabla ( |\nabla \varrho|^2)
    - 4 \textrm{div}(\varrho \nabla \sqrt{\varrho} \otimes \nabla \sqrt{\varrho})
    \right)=0,
\end{equation}
\begin{equation} \label{ag_3.0}
    \partial_t (\varrho v) + \textrm{div} (\varrho v \otimes u) + 2\nu \textrm{div} (\varrho \nabla^t u) = 0.
\end{equation}

\begin{definition} \label{wf-aug}
    We say that $(\varrho,v,w)$ is a weak solution of the ``augmented velocity'' system if (\ref{ag_1.0}) - (\ref{ag_3.0}) are satisfied in the distribution sense, namely
\begin{equation} \label{weak-cont}
    - \int_{\mathbb{T}^3} (\varrho \cdot \varphi)(T,\cdot)dx  
    + \int_{\mathbb{T}^3} (\varrho \cdot \varphi)(0,\cdot)dx
    + \int_0^T \int_{\mathbb{T}^3} \left( \varrho \partial_t \varphi + \varrho u \cdot \nabla \varphi \right) dxdt=0,
\end{equation}
for any $\varphi \in C_c^\infty([0,T)\times\mathbb{T}^3;\mathbb{R})$.
$$
- \int_{\mathbb{T}^3} (\varrho w \cdot \phi)(T,\cdot)dx  
+ \int_{\mathbb{T}^3} (\varrho w \cdot \phi)(0,\cdot)dx
$$
$$
+ \int_0^T \int_{\mathbb{T}^3} 
\left(
\varrho w \cdot \partial_t \phi + \varrho w \otimes u : \nabla \phi + p(\varrho) \textrm{div}\phi
- 2\nu \varrho \nabla w:\nabla\phi + 2\nu \varrho \nabla v:\nabla \phi
\right)
dxdt
$$
\begin{equation} \label{weak-w}
    +2\kappa^2 \int_0^T \int_{\mathbb{T}^3} 
    \left( 
    \nabla\varrho \cdot \nabla( \varrho \textrm{div}\phi) 
    - \frac{1}{2} |\nabla \varrho|^2 \textrm{div}\phi
    + \nabla \varrho \otimes \nabla \varrho : \nabla \phi
    \right)dxdt=0,
\end{equation}
for any $\phi \in C_c^\infty([0,T)\times\mathbb{T}^3;\mathbb{R}^{3})$
$$
- \int_{\mathbb{T}^3} (\varrho v \cdot \varphi)(T,\cdot)dx  
+ \int_{\mathbb{T}^3} (\varrho v \cdot \varphi)(0,\cdot)dx.
$$
\begin{equation} \label{weak-v-bar}
+ \int_0^T \int_{\mathbb{T}^3}
\left(
\varrho v \cdot\partial_t\varphi 
+  \varrho v \otimes u : \nabla\varphi 
- 2\nu \varrho \nabla^t  v:\nabla\varphi
+ 2\nu \varrho \nabla^t w: \nabla\varphi
\right)dxdt
= 0,
\end{equation}
for any $\varphi \in C_c^\infty([0,T)\times\mathbb{T}^3;\mathbb{R}^3)$.\\
Moreover, the following energy inequality holds
  \begin{equation}
   \label{ei}
  \begin{split}
    \frac{d}{dt} \int_{\mathbb{T}^3} \Big( \frac{1}{2} \varrho(t,x) \left( |w(t,x)|^2 + |v(t,x)|^2 \right) &+ H(\varrho) + \kappa^2|\nabla \varrho(t,x)|^2 \Big) dx\\
 +2\nu \int_{\mathbb{T}^3} \Big( |\mathcal{S}(u)(t,x)|^2 + |\mathcal{A}(w)(t,x)|^2   &+ \frac{p'(\varrho)}{\varrho} |\nabla \varrho(t,x)|^2 \Big)dx 
    \\
    +4\nu \kappa^2 \int_{\mathbb{T}^3} |\Delta \varrho(t,x)|^2  dx
    \leq 0,
   \end{split}
\end{equation}
where $\mathcal{A}(w)$ is the anti-symmetric part of the tensor $\mathcal{T}(w)$ defined by (\ref{TT}) with $w$ in the place of $u$.
\end{definition}

In the case of the {\em pressureless system} (\ref{cont_cap-press}), (\ref{mom_cap-press}), the ``augmented velocity''  system is given by 
\begin{equation} \label{agpl_1.0}
    \partial_t \varrho + \textrm{div} (\varrho u) = 0,
\end{equation}
$$
\partial_t (\varrho w) + \textrm{div}(\varrho w \otimes u) 
    - 2\nu \textrm{div}(\varrho \nabla w) + 2\nu \textrm{div} (\varrho \nabla v)
$$
\begin{equation} \label{agpl_2.0}
    - 2\kappa^2 \left(
    \nabla (\varrho \Delta \varrho) + \frac{1}{2} \nabla ( |\nabla \varrho|^2)
    - 4 \textrm{div}(\varrho \nabla \sqrt{\varrho} \otimes \nabla \sqrt{\varrho})
    \right)=0,
\end{equation}
\begin{equation} \label{agpl_3.0}
    \partial_t (\varrho v) + \textrm{div} (\varrho v \otimes u) + 2\nu \textrm{div} (\varrho \nabla^t u) = 0.
\end{equation}

\begin{definition} \label{wf-augpl}
    We say that $(\varrho,v,w)$ is a weak solution of the ``augmented velocity'' system if (\ref{agpl_1.0}) - (\ref{agpl_3.0}) are satisfied in the distribution sense, namely
\begin{equation} \label{weak-cont-pless}
    - \int_{\mathbb{T}^3} (\varrho \cdot \varphi)(T,\cdot)dx  
    + \int_{\mathbb{T}^3} (\varrho \cdot \varphi)(0,\cdot)dx
    + \int_0^T \int_{\mathbb{T}^3} \left( \varrho \partial_t \varphi + \varrho u \cdot \nabla \varphi \right) dxdt=0,
\end{equation}
for any $\varphi \in C_c^\infty([0,T)\times\mathbb{T}^3;\mathbb{R})$.
$$
- \int_{\mathbb{T}^3} (\varrho w \cdot \phi)(T,\cdot)dx  
+ \int_{\mathbb{T}^3} (\varrho w \cdot \phi)(0,\cdot)dx
$$
$$
+ \int_0^T \int_{\mathbb{T}^3} 
\left(
\varrho w \cdot \partial_t \phi + \varrho w \otimes u : \nabla \phi - 2\nu \varrho \nabla w:\nabla\phi + 2\nu \varrho \nabla v:\nabla \phi
\right)
dxdt
$$
\begin{equation} \label{weak-w-pless}
    +2\kappa^2 \int_0^T \int_{\mathbb{T}^3} 
    \left( 
    \nabla\varrho \cdot \nabla( \varrho \textrm{div}\phi) 
    - \frac{1}{2} |\nabla \varrho|^2 \textrm{div}\phi
    + \nabla \varrho \otimes \nabla \varrho : \nabla \phi
    \right)dxdt=0,
\end{equation}
for any $\phi \in C_c^\infty([0,T)\times\mathbb{T}^3;\mathbb{R}^{3})$
$$
- \int_{\mathbb{T}^3} (\varrho v \cdot \varphi)(T,\cdot)dx  
+ \int_{\mathbb{T}^3} (\varrho v \cdot \varphi)(0,\cdot)dx.
$$
\begin{equation} \label{weak-v-bar-pless}
+ \int_0^T \int_{\mathbb{T}^3}
\left(
\varrho v \cdot\partial_t\varphi 
+  \varrho v \otimes u : \nabla\varphi 
- 2\nu \varrho \nabla^t  v:\nabla\varphi
+ 2\nu \varrho \nabla^t w: \nabla\varphi
\right)dxdt
= 0,
\end{equation}
for any $\varphi \in C_c^\infty([0,T)\times\mathbb{T}^3;\mathbb{R}^3)$.\\
Moreover, the following energy inequality holds
 $$
    \frac{d}{dt} \int_{\mathbb{T}^3} \left( \frac{1}{2} \varrho(t,x) \left( |w(t,x)|^2 + |v(t,x)|^2 \right) + \kappa^2|\nabla \varrho(t,x)|^2 \right) dx
    $$
\begin{equation} \label{ei-pless}
    +2\nu \int_{\mathbb{T}^3} \left( |\mathcal{S}(u)(t,x)|^2 + |\mathcal{A}(w)(t,x)|^2   \right)dx 
    +4\nu \kappa^2 \int_{\mathbb{T}^3} |\Delta \varrho(t,x)|^2  dx
    \leq 0.
\end{equation}
\end{definition}

\begin{remark} \label{visc-und}
The viscous terms present in the relations (\ref{weak-w-pless}) and (\ref{weak-v-bar-pless}) should be understood as in (\ref{diss}), namely in terms of the symmetric and anti-symmetric part of the tensor $\mathcal{T}$ defined by (\ref{TT}). 
\end{remark}

\begin{remark}
As the authors remarked in \cite{BGL}, a global weak solution $(\varrho, u)$ of the compressible Navier-Stokes Korteweg system is also a solution of the ``augmented velocity''  version. Consequently, Theorem \ref{main-ex} holds for weak solutions to the system
\eqref{ag_1.0} - \eqref{ag_3.0} in the sense of Definition \ref{wf-aug} and we have the equivalent results in the pressureless case.
\end{remark}


\subsection{Relative entropy inequality for  capillary Navier-Stokes fluids}
\label{SRE}
\noindent
In this section we define a relative entropy inequality for the relative entropy functional given by
$$
\mathcal{E}(T,\cdot) = \mathcal{E}(\varrho, w, v|r, W, V)(T,\cdot)$$
$$=  \int_{\mathbb{T}^3} 
\left[
\frac{1}{2} \varrho \left( |w-W|^2 + |v-V|^2 \right) 
+ \kappa^2 |\nabla \varrho - \nabla r|^2  
\right]
dx 
+ \int_{\mathbb{T}^3} H(\varrho|r) dx
$$
where
$$
H(\varrho|r) = H(\varrho) - H(r) - H'(r)(\varrho - r)
$$
and $(\varrho, v, w)$ are weak solutions of the augmented system (\ref{ag_1.0}) - (\ref{ag_3.0}) and $(r,V, W)$ are smooth enough states of the fluid. Our result reads as follows.

\begin{theorem} \label{rei-th}
    Let $(\varrho,v,w)$ be a weak solution of the ``augmented velocity'' system (\ref{ag_1.0}) - (\ref{ag_3.0}) in the sense of Definition \ref{wf-aug}. For any smooth functions $(r,W,V)$, we have the following relative energy inequality
    $$
\mathcal{E}(T,\cdot) - \mathcal{E}(0,\cdot)
\leq
\int_0^T \int_{\mathbb{T}^3} \varrho \left( \partial_t V \cdot \left( V - v \right)
+
\left( \nabla V u \right) \cdot \left( V - v \right) \right)
dxdt
$$
$$
+ \int_0^T \int_{\mathbb{T}^3} \varrho \left( \partial_t W \cdot \left( W - w \right)
+
\left( \nabla W u \right) \cdot \left( W - w \right) \right)
dxdt
$$
$$
+ 2\kappa^2 \int_0^T \int_{\mathbb{T}^3} \left( \varrho \partial_t \Delta r + \varrho u \cdot \nabla \Delta r - \partial_t r \Delta r\right)
dxdt
$$
$$
- 2\kappa^2 \int_0^T \int_{\mathbb{T}^3} 
\left( 
\nabla\varrho \cdot \nabla( \varrho \textrm{div} W) 
- \frac{1}{2} |\nabla \varrho|^2 \textrm{div} W
+ \nabla \varrho \otimes \nabla \varrho : \nabla W
\right)
dxdt
$$
$$
+ 2\nu \int_0^T \int_{\mathbb{T}^3} \varrho \left( \nabla^t v : \nabla V + \nabla w: \nabla W  \right)
dxdt
- 2\nu \int_0^T \int_{\mathbb{T}^3} \varrho
\left(
\nabla v:\nabla W
+  \nabla^t w: \nabla
V
\right)
dxdt
$$
$$
- \int_0^T \int_{\mathbb{T}^3} 
\left[
\partial_t (H'(r))(\varrho - r)
+ \varrho u \nabla (H'(r)) + p(\varrho) \textrm{div} W
\right]dxdt
$$
\begin{equation} \label{rei-res}
-2\nu \int_0^T \int_{\mathbb{T}^3} \left( |\mathcal{S}(u)|^2 + |\mathcal{A}(w)|^2   + \frac{p'(\varrho)}{\varrho} |\nabla \varrho|^2 \right)dxdt 
-4\nu \kappa^2 \int_0^T \int_{\mathbb{T}^3} |\Delta \varrho|^2  dxdt
\end{equation}
\end{theorem}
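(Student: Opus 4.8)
The plan is to split the relative entropy into the intrinsic energy of the weak solution plus ``mixed'' terms that couple the weak solution with the smooth fields $(r,W,V)$, and to handle the two pieces by different tools. Concretely, I would write
$$
\mathcal{E}(T,\cdot) = E(\varrho,v,w)(T,\cdot) + \mathcal{R}(T,\cdot),
\qquad
E(\varrho,v,w) = \int_{\mathbb{T}^3}\Big(\tfrac12\varrho(|w|^2+|v|^2) + H(\varrho) + \kappa^2|\nabla\varrho|^2\Big)\,dx,
$$
where $\mathcal{R}$ collects all remaining terms, namely $-\varrho w\cdot W + \tfrac12\varrho|W|^2$, $-\varrho v\cdot V + \tfrac12\varrho|V|^2$, $-2\kappa^2\nabla\varrho\cdot\nabla r + \kappa^2|\nabla r|^2$, and $-H(r)-H'(r)(\varrho-r)$. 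For the first piece I would simply integrate the energy inequality \eqref{ei} in time, obtaining $E(T)-E(0)\le -2\nu\int_0^T\!\!\int(|\mathcal{S}(u)|^2+|\mathcal{A}(w)|^2+\tfrac{p'(\varrho)}{\varrho}|\nabla\varrho|^2)-4\nu\kappa^2\int_0^T\!\!\int|\Delta\varrho|^2$, which is precisely the last line of \eqref{rei-res}. Thus the whole content of the theorem reduces to computing $\mathcal{R}(T)-\mathcal{R}(0)$ exactly by means of the weak formulations.

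The second step is to evaluate the evolution of each term of $\mathcal{R}$ by inserting an appropriate test function into the weak equations of Definition \ref{wf-aug}. The quadratic-in-test-field terms $\tfrac12\varrho|W|^2$ and $\tfrac12\varrho|V|^2$, the enthalpy cross term $H'(r)\varrho$, and (after an integration by parts turning $-2\kappa^2\nabla\varrho\cdot\nabla r$ into $2\kappa^2\varrho\,\Delta r$) the capillary cross term are all governed by the continuity equation \eqref{weak-cont}, using $\varphi=\tfrac12|W|^2,\ \tfrac12|V|^2,\ H'(r),\ \Delta r$ respectively. The genuine cross terms $\varrho w\cdot W$ and $\varrho v\cdot V$ are governed by the momentum-type identities \eqref{weak-w} and \eqref{weak-v-bar} with $\phi=W$ and $\varphi=V$; these are the only places where the pressure, the Korteweg tensor and the viscous coupling of $w$ and $v$ enter. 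The purely smooth contributions $\kappa^2|\nabla r|^2$, $H(r)$, and $H'(r)r$ are differentiated directly, with $\partial_t\nabla r$ handled by one integration by parts to produce the $-2\kappa^2\int\partial_t r\,\Delta r$ term.

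The third step is the reorganization. I expect the convective parts to combine through the algebraic identity $(\nabla V u)\cdot V - v\otimes u:\nabla V = (\nabla V u)\cdot(V-v)$ (and its $W$-analogue), yielding the transport lines $\varrho\,\partial_t V\cdot(V-v)+\varrho(\nabla V u)\cdot(V-v)$ and the corresponding $W$-line. The enthalpy contributions should telescope: writing $\partial_t(H'(r))=H''(r)\partial_t r$ and $\nabla(H'(r))=H''(r)\nabla r$, the $H(r)$ and $H'(r)r$ derivatives cancel against part of $\partial_t\!\int H'(r)\varrho$, leaving exactly $-\partial_t(H'(r))(\varrho-r)-\varrho u\cdot\nabla(H'(r))$, to which the pressure term $-p(\varrho)\,\mathrm{div}\,W$ coming from \eqref{weak-w} is adjoined. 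The four viscous cross terms assemble into $2\nu\int\varrho(\nabla^t v:\nabla V+\nabla w:\nabla W)-2\nu\int\varrho(\nabla v:\nabla W+\nabla^t w:\nabla V)$, and the three Korteweg terms carried by $\phi=W$ appear with the global minus sign shown in \eqref{rei-res}; summing all of this with the dissipation from the energy inequality reproduces the claimed inequality.

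The hard part will be the bookkeeping of the capillary terms together with the justification that all these distributional manipulations are admissible. In particular I would need to check that the regularity guaranteed by Definition \ref{weak-def}, namely $\varrho\in L^\infty_tH^1\cap L^2_tL^2$, $\nabla\sqrt{\varrho}\in L^\infty_tL^2$, $\sqrt{\varrho}u\in L^\infty_tL^2$ and $\mathcal{T}\in L^2_{t,x}$, makes every product appearing after the integrations by parts integrable, and that the viscous terms $\nabla w,\nabla v$ are consistently interpreted through the tensor $\mathcal{T}$ as in Remark \ref{visc-und}. The smoothness of $(r,W,V)$ makes each of them a legitimate test function, so no density/regularization argument beyond this is expected; the only genuine inequality used is \eqref{ei}, while every other step is an identity.
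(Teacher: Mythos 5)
Your proposal is correct and takes essentially the same route as the paper's proof: the paper likewise starts from the energy inequality (\ref{ei}) to produce the dissipation line, tests the continuity equation (\ref{weak-cont}) with $\frac{1}{2}|W|^2$, $\frac{1}{2}|V|^2$ and $2\kappa^2\Delta r$, tests (\ref{weak-w}) with $W$ and (\ref{weak-v-bar}) with $V$, differentiates $\kappa^2|\nabla r|^2$ directly to produce $-2\kappa^2\int_0^T\int_{\mathbb{T}^3}\partial_t r\,\Delta r$, and obtains the enthalpy line from $\partial_t\left(H(r)+H'(r)(\varrho-r)\right)$ via exactly the telescoping you describe. The only cosmetic difference is that the paper does not explicitly name your splitting $\mathcal{E}=E+\mathcal{R}$, writing the combined inequality directly; the computations are identical.
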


\begin{remark} \label{visc-rei}
As in Definition \ref{wf-augpl}, the viscous terms present in the right hand side of (\ref{rei-res}), namely
$$
2\nu \int_0^T \int_{\mathbb{T}^3} \varrho \left( \nabla^t v : \nabla V + \nabla w: \nabla W  \right)
dxdt
- 2\nu \int_0^T \int_{\mathbb{T}^3} \varrho
\left(
\nabla v:\nabla W
+  \nabla^t w: \nabla
V
\right)
dxdt
$$
should be interpreted in terms of the symmetric and anti-symmetric part of the tensor $\mathcal{T}$ defined by (\ref{TT}). 
\end{remark}

\begin{proof}
\noindent
Thanks to the energy inequality (\ref{ei}), we can write
$$
\mathcal{E}(T,\cdot) - \mathcal{E}(0,\cdot) \leq 
\int_{\mathbb{T}^3} \left( \frac{1}{2} \varrho |W|^2 - \varrho w \cdot W + \frac{1}{2}\varrho |V|^2 - \varrho v \cdot V
+\kappa^2|\nabla r|^2 - 2\kappa^2\nabla \varrho \nabla r\right) (T,\cdot)
dx
$$
$$
- \int_{\mathbb{T}^3} \left( \frac{1}{2} \varrho |W|^2 - \varrho w \cdot W + \frac{1}{2}\varrho |V|^2 - \varrho v \cdot
V
+\kappa^2|\nabla r|^2 - 2\kappa^2\nabla \varrho \nabla r\right) (0,\cdot)
dx
$$
$$
- \int_{\mathbb{T}^3} \left( H(r) + H'(r)(\varrho-r) \right) (T,\cdot)
dx
+ \int_{\mathbb{T}^3} \left( H(r) + H'(r)(\varrho-r) \right) (0,\cdot)
dx
$$
\begin{equation} \label{step-1}
-2\nu \int_0^T\int_{\mathbb{T}^3} \left( |\mathcal{S}(u)|^2 + |\mathcal{A}(w)|^2   + \frac{p'(\varrho)}{\varrho} |\nabla \varrho|^2 \right)dx dt
-4\nu \kappa^2 \int_0^T\int_{\mathbb{T}^3} |\Delta \varrho|^2  dxdt.
\end{equation}
First, we test the continuity equation (\ref{weak-cont}) by $\displaystyle{\frac{1}{2}|W|^2}$, $\displaystyle{\frac{1}{2}|V|^2}$ and $2\kappa^2\Delta r$, respectively. We have,
\begin{align}
 - \int_{\mathbb{T}^3} \frac{1}{2} (\varrho |W|^2)(T,\cdot)dx  
    &+  \int_{\mathbb{T}^3} \frac{1}{2}(\varrho |W|^2)(0,\cdot)dx\notag\\
    &+ \int_0^T\!\!\! \int_{\mathbb{T}^3} \frac{1}{2} \left( \varrho \partial_t |W|^2 + \varrho u \cdot \nabla |W|^2 \right)
    dxdt=0, \label{test-w2}
    \end{align}
\begin{equation} \label{test-v2}
    - \int_{\mathbb{T}^3} \frac{1}{2} (\varrho |V|^2)(t)  
    +  \int_{\mathbb{T}^3} \frac{1}{2}(\varrho |V|^2)(0)
    + \int_0^T \int_{\mathbb{T}^3} \frac{1}{2} \left( \varrho \partial_t |V|^2 + \varrho u \cdot \nabla |V|^2 \right)
    dxdt=0,
\end{equation}
\begin{align} 
    2\kappa^2 \int_{\mathbb{T}^3} (\nabla \varrho \cdot \nabla r)(T,\cdot)dx
    &- 2\kappa^2 \int_{\mathbb{T}^3} (\nabla \varrho \cdot \nabla r)(0,\cdot)dx\notag\\
    &+ 2\kappa^2 \int_0^T \int_{\mathbb{T}^3} \left( \varrho \partial_t \Delta r + \varrho u \cdot \nabla \Delta r \right)dxdt =0.
\label{test-Dr}
\end{align}
Moreover, we have
\begin{equation} \label{grad-r2}
    \kappa^2\int_{\mathbb{T}^3} |\nabla r|^2 (T,\cdot)dx
    - \kappa^2 \int_{\mathbb{T}^3} |\nabla r|^2 (0,\cdot)dx 
    = - 2\kappa^2 \int_0^T \int_{\mathbb{T}^3} \partial_t r \Delta r dxdt.
\end{equation}
Now, we test the equation (\ref{weak-w}) by $W$, 
$$
- \int_{\mathbb{T}^3} (\varrho w \cdot W)(T,\cdot)dx
+ \int_{\mathbb{T}^3} (\varrho w \cdot W)(0,\cdot)
dx
$$
$$
+ \int_0^T \int_{\mathbb{T}^3} 
\left(
\varrho w \cdot \partial_t W + \varrho w \otimes u : \nabla W + p(\varrho) \textrm{div} W
- 2\nu \varrho \nabla w:\nabla W + 2\nu \varrho \nabla v:\nabla W
\right)dxdt    
$$
\begin{equation} \label{test-w}
    +2\kappa^2 \int_0^T \int_{\mathbb{T}^3}
    \left( 
    \nabla\varrho \cdot \nabla( \varrho \textrm{div} W) 
    - \frac{1}{2} |\nabla \varrho|^2 \textrm{div} W
    + \nabla \varrho \otimes \nabla \varrho : \nabla W
    \right)dxdt=0.
\end{equation}
and we test the equation (\ref{weak-v-bar}) by $V$,
$$
- \int_{\mathbb{T}^3} (\varrho v \cdot V)(T,\cdot)dx 
+ \int_{\mathbb{T}^3} (\varrho v \cdot V)(0,\cdot)dx
$$
\begin{equation} \label{test-v-bar}
+\int_0^T \int_{\mathbb{T}^3}
\left(
\varrho v\cdot\partial_t V
+  \varrho v \otimes u : \nabla V
- 2\nu \varrho \nabla^t v:\nabla
V
+ 2\nu \varrho \nabla^t w: \nabla
V
\right)
dxdt
= 0.
\end{equation}
Finally, using the continuity equation (\ref{cont_cap}), we have
$$
\int_0^T \int_{\mathbb{T}^3} 
\left[
\partial_t \left( H(r) + H'(r)(\varrho - r)
\right)
\right]
dxdt
$$
$$
= \int_0^T \int_{\mathbb{T}^3} 
\left[
H'(r)\partial_t r + \partial_t (H'(r))(\varrho - r) + H'(r)\partial_t \varrho - H'(r) \partial_t r
\right]dxdt
$$
$$
= \int_0^T \int_{\mathbb{T}^3} 
\left[
\partial_t (H'(r))(\varrho - r)
- H'(r)\textrm{div}(\varrho u)
\right]dxdt
$$
\begin{equation} \label{test-H}
    = \int_0^T \int_{\mathbb{T}^3} 
    \left[
    \partial_t (H'(r))(\varrho - r)
    + \varrho u \nabla (H'(r))
    \right]dxdt.
\end{equation}
Plugging (\ref{test-w2}) - (\ref{test-H}) in (\ref{step-1}), we obtain
$$
\mathcal{E}(T,\cdot) - \mathcal{E}(0,\cdot)
\leq
\int_0^T \int_{\mathbb{T}^3} \frac{1}{2} \left( \varrho \partial_t |W|^2 + \varrho u \cdot \nabla |W|^2 \right)dxdt
+ \int_0^T \int_{\mathbb{T}^3} \frac{1}{2} \left( \varrho \partial_t |V|^2 + \varrho u \cdot \nabla |V|^2 \right)dxdt
$$
$$
+ 2\kappa^2 \int_0^T \int_{\mathbb{T}^3} \left( \varrho \partial_t \Delta r + \varrho u \cdot \nabla \Delta r - \partial_t r \Delta r\right)dxdt
$$
$$
- \int_0^T \int_{\mathbb{T}^3} 
\left(
\varrho w \cdot \partial_t W + \varrho w \otimes u : \nabla W + p(\varrho) \textrm{div} W
- 2\nu \varrho \nabla w:\nabla W + 2\nu \varrho \nabla v:\nabla W
\right)dxdt
$$
$$
- 2\kappa^2 \int_0^T \int_{\mathbb{T}^3} 
\left( 
\nabla\varrho \cdot \nabla( \varrho \textrm{div} W) 
- \frac{1}{2} |\nabla \varrho|^2 \textrm{div} W
+ \nabla \varrho \otimes \nabla \varrho : \nabla W
\right)dxdt
$$
$$
- \int_0^T \int_{\mathbb{T}^3}
\left(
\varrho v \cdot\partial_t V
+  \varrho v \otimes u : \nabla V
- 2\nu \varrho \nabla^t v:\nabla
V
+ 2\nu \varrho \nabla^t w: \nabla
V
\right)dxdt
$$
$$
- \int_0^T \int_{\mathbb{T}^3} 
\left[
\partial_t (H'(r))(\varrho - r)
+ \varrho u \nabla (H'(r))
\right]dxdt
$$
\begin{equation} \label{step-2}
-2\nu \int_0^T \int_{\mathbb{T}^3} \left( |\mathcal{S}(u)|^2 + |\mathcal{A}(w)|^2   + \frac{p'(\varrho)}{\varrho} |\nabla \varrho|^2 \right)dxdt 
-4\nu \kappa^2 \int_0^T \int_{\mathbb{T}^3} |\Delta \varrho|^2  dxdt.
\end{equation}
Rearranging (\ref{step-2}), we obtain
$$
\mathcal{E}(T,\cdot) - \mathcal{E}(0,\cdot)
\leq
\int_0^T \int_{\mathbb{T}^3} \varrho \left( \partial_t V \cdot \left( V - v \right)
+
\left( \nabla V u \right) \cdot \left( V - v \right) \right)
dxdt
$$
$$
+ \int_0^T \int_{\mathbb{T}^3} \varrho \left( \partial_t W \cdot \left( W - w \right)
+
\left( \nabla W u \right) \cdot \left( W - w \right) \right)
dxdt
$$
$$
+ 2\kappa^2 \int_0^T \int_{\mathbb{T}^3} \left( \varrho \partial_t \Delta r + \varrho u \cdot \nabla \Delta r - \partial_t r \Delta r\right)
dxdt
$$
$$
- 2\kappa^2 \int_0^T \int_{\mathbb{T}^3} 
\left( 
\nabla\varrho \cdot \nabla( \varrho \textrm{div} W) 
- \frac{1}{2} |\nabla \varrho|^2 \textrm{div} W
+ \nabla \varrho \otimes \nabla \varrho : \nabla W
\right)
dxdt
$$
$$
+ 2\nu \int_0^T \int_{\mathbb{T}^3} \varrho \left( \nabla^t v : \nabla V + \nabla w: \nabla W  \right)
dxdt
- 2\nu \int_0^T \int_{\mathbb{T}^3} \varrho
\left(
\nabla v:\nabla W
+  \nabla^t w: \nabla
V
\right)
dxdt
$$
$$
- \int_0^T \int_{\mathbb{T}^3} 
\left[
\partial_t (H'(r))(\varrho - r)
+ \varrho u \nabla (H'(r)) + p(\varrho) \textrm{div} W
\right]dxdt
$$
\begin{equation} \label{step-3}
-2\nu \int_0^T \int_{\mathbb{T}^3} \left( |\mathcal{S}(u)|^2 + |\mathcal{A}(w)|^2   + \frac{p'(\varrho)}{\varrho} |\nabla \varrho|^2 \right)dxdt 
-4\nu \kappa^2 \int_0^T \int_{\mathbb{T}^3} |\Delta \varrho|^2  dxdt
\end{equation}
that is exactly (\ref{rei-res}). 
\end{proof}

\subsection{Relative entropy inequality for pressureless capillary  fluids}
\noindent
We introduce the following relative entropy functional related to the pressureless system (\ref{cont_cap-press}) - (\ref{mom_cap-press}):
$$
\mathcal{E}(T,\cdot) = \mathcal{E}(\varrho, w, v|r, W, V)(T,\cdot)
$$
\begin{equation} \label{ref-pless}
=  \int_{\mathbb{T}^3} 
\left[
\frac{1}{2} \varrho \left( |w-W|^2 + |v-V|^2 \right) 
+ \kappa^2 |\nabla \varrho - \nabla r|^2  
\right]
dx,
\end{equation}
where $(\varrho, v, w)$ are weak solutions of the augmented system (\ref{agpl_1.0}) - (\ref{agpl_3.0}) and $(r,V, W)$ are smooth enough states of the fluid.
Now, the idea is to derive a relative entropy inequality without the presence of the pressure terms and to apply the same argument of Section \ref{SRE}.

Thanks to the energy inequality (\ref{ei-pless}), we can write
$$
\mathcal{E}(T,\cdot) - \mathcal{E}(0,\cdot) \leq 
\int_{\mathbb{T}^3} \left( \frac{1}{2} \varrho |W|^2 - \varrho w \cdot W + \frac{1}{2}\varrho |V|^2 - \varrho v \cdot V
+\kappa^2|\nabla r|^2 - 2\kappa^2\nabla \varrho \nabla r\right) (T,\cdot)
dx
$$
$$
- \int_{\mathbb{T}^3} \left( \frac{1}{2} \varrho |W|^2 - \varrho w \cdot W + \frac{1}{2}\varrho |V|^2 - \varrho v \cdot
V
+\kappa^2|\nabla r|^2 - 2\kappa^2\nabla \varrho \nabla r\right) (0,\cdot)
dx
$$
\begin{equation} \label{step-1-pless}
-2\nu \int_0^T\int_{\mathbb{T}^3} \left( |\mathcal{S}(u)|^2 + |\mathcal{A}(w)|^2 \right)dx dt
-4\nu \kappa^2 \int_0^T\int_{\mathbb{T}^3} |\Delta \varrho|^2  dxdt.
\end{equation}
We test the continuity equation (\ref{weak-cont-pless}) by $\displaystyle{\frac{1}{2}|W|^2}$, $\displaystyle{\frac{1}{2}|V|^2}$ and $2\kappa^2\Delta r$, the weak formulations 
 (\ref{weak-w-pless}), (\ref{weak-v-bar-pless}) by $W$ and  $V$ respectively and plugging them in (\ref{step-1-pless}), we obtain
$$\mathcal{E}(T,\cdot) - \mathcal{E}(0,\cdot)
\leq
\int_0^T \int_{\mathbb{T}^3} \frac{1}{2} \left( \varrho \partial_t |W|^2 + \varrho u \cdot \nabla |W|^2 \right)dxdt
$$
$$
+ \int_0^T \int_{\mathbb{T}^3} \frac{1}{2} \left( \varrho \partial_t |V|^2 + \varrho u \cdot \nabla |V|^2 \right)dxdt
$$
$$
+ 2\kappa^2 \int_0^T \int_{\mathbb{T}^3} \left( \varrho \partial_t \Delta r + \varrho u \cdot \nabla \Delta r - \partial_t r \Delta r\right)dxdt
$$
$$
- \int_0^T \int_{\mathbb{T}^3} 
\left(
\varrho w \cdot \partial_t W + \varrho w \otimes u : \nabla W 
- 2\nu \varrho \nabla w:\nabla W + 2\nu \varrho \nabla v:\nabla W
\right)dxdt
$$
$$
- 2\kappa^2 \int_0^T \int_{\mathbb{T}^3} 
\left( 
\nabla\varrho \cdot \nabla( \varrho \textrm{div} W) 
- \frac{1}{2} |\nabla \varrho|^2 \textrm{div} W
+ \nabla \varrho \otimes \nabla \varrho : \nabla W
\right)dxdt
$$
$$
- \int_0^T \int_{\mathbb{T}^3}
\left(
\varrho v \cdot\partial_t V
+  \varrho v \otimes u : \nabla V
- 2\nu \varrho \nabla^t v:\nabla
V
+ 2\nu \varrho \nabla^t w: \nabla
V
\right)dxdt
$$
\begin{equation} \label{step-2-pless}
-2\nu \int_0^T \int_{\mathbb{T}^3} \left( |\mathcal{S}(u)|^2 + |\mathcal{A}(w)|^2   + \frac{p'(\varrho)}{\varrho} |\nabla \varrho|^2 \right)dxdt 
-4\nu \kappa^2 \int_0^T \int_{\mathbb{T}^3} |\Delta \varrho|^2  dxdt.
\end{equation}
Rearranging (\ref{step-2-pless}), we obtain the following relative entropy inequality for the pressureless case,
$$
\mathcal{E}(T,\cdot) - \mathcal{E}(0,\cdot)
\leq
\int_0^T \int_{\mathbb{T}^3} \varrho \left( \partial_t V \cdot \left( V - v \right)
+
\left( \nabla V u \right) \cdot \left( V - v \right) \right)
dxdt
$$
$$
+ \int_0^T \int_{\mathbb{T}^3} \varrho \left( \partial_t W \cdot \left( W - w \right)
+
\left( \nabla W u \right) \cdot \left( W - w \right) \right)
dxdt
$$
$$
+ 2\kappa^2 \int_0^T \int_{\mathbb{T}^3} \left( \varrho \partial_t \Delta r + \varrho u \cdot \nabla \Delta r - \partial_t r \Delta r\right)
dxdt
$$
$$
- 2\kappa^2 \int_0^T \int_{\mathbb{T}^3} 
\left( 
\nabla\varrho \cdot \nabla( \varrho \textrm{div} W) 
- \frac{1}{2} |\nabla \varrho|^2 \textrm{div} W
+ \nabla \varrho \otimes \nabla \varrho : \nabla W
\right)
dxdt
$$
$$
+ 2\nu \int_0^T \int_{\mathbb{T}^3} \varrho \left( \nabla^t v : \nabla V + \nabla w: \nabla W  \right)
dxdt
- 2\nu \int_0^T \int_{\mathbb{T}^3} \varrho
\left(
\nabla v:\nabla W
+  \nabla^t w: \nabla
V
\right)
dxdt
$$
\begin{equation} \label{step-3-pless}
-2\nu \int_0^T \int_{\mathbb{T}^3} \left( |\mathcal{S}(u)|^2 + |\mathcal{A}(w)|^2   \right)dxdt 
-4\nu \kappa^2 \int_0^T \int_{\mathbb{T}^3} |\Delta \varrho|^2  dxdt.
\end{equation}

\section{Application 1: Weak-Strong Uniqueness} \label{app}
\noindent
This section is devoted to the proof of  weak strong uniqueness properties which are  one of the main application of the relative entropy inequality (\ref{rei-res}). We will show these results proper both for the Navier Stokes capillary system (\ref{cont_cap}) - (\ref{mom_cap}) and for the capillary pressureless model (\ref{cont_cap-press}) - (\ref{mom_cap-press}). Moreover for this latter model we will analyse the particular case in which the initial data are given by a strictly positive initial density $\varrho_{0}>0$ and an irrotational initial velocity $u_{0}= -\nabla \widetilde{\phi}(\varrho_0)$.

\subsection{Weak-strong uniqueness for capillary fluids} \label{ws-un}
\noindent
We introduce the system satisfied by the strong solution $(r,W,V)$:
\begin{equation} \label{cont-r}
    \partial_t r + \textrm{div}(rU) =0,
\end{equation}
\begin{equation} \label{mom-W}
    r \left( \partial_t W + \nabla W \cdot U \right) + \nabla p(r) - 2\nu \textrm{div} (r \nabla W)
    + 2\nu \textrm{div} (r \nabla V)
    - 2\kappa^2
    r \nabla \Delta r =0,
\end{equation}
\begin{equation} \label{mom-V}
    r \left( \partial_t V + \nabla V \cdot U \right)
    - 2\nu \textrm{div} (r \nabla^t V)
    + 2\nu \textrm{div} (r \nabla^t W) = 0
\end{equation}
with 
$U=W-V$ and $V=2\nu\nabla\log r$ and $(r,W,V)$
belonging to the following regularity class
\begin{equation} \label{reg_class}
\begin{split}
0<\inf_{\left(0,T\right)\times\Omega}&r\leq r\leq\sup_{\left(0,T\right)\times\Omega}r<+\infty,\\
\nabla r\in L^{2}\left(0,T;L^{\infty}\left(\Omega\right)\right)&\cap L^{1}\left(0,T;W^{1,\infty}\left(\Omega\right)\right),\\
W\in L^{\infty}\left(0,T;W^{2,\infty}\left(\Omega\right)\right)&\cap W^{1,\infty}\left(0,T;L^{\infty}\left(\Omega\right)\right),\\
V\in L^{\infty}\left(0,T;W^{2,\infty}\left(\Omega\right)\right)&\cap W^{1,\infty}\left(0,T;L^{\infty}\left(\Omega\right)\right)\\
\partial_{t}H'(r)\in L^{1}(0,T;L^{\gamma/\gamma -1}(\Omega)), &\quad \nabla H'(r)\in L^{1}(0,T;L^{2\gamma/\gamma -1}(\Omega)).
\end{split}
\end{equation}
We remark that any strong solution of the system (\ref{cont_cap}) - (\ref{mom_cap}) is a strong solution of the above augmented system. Concerning the existence of strong solution for capillary fluids we recall that   the local existence of smooth solutions was proved  in  \cite{HaLi94}, while in \cite{HaLi96} the authors got the existence of global smooth solutions but for small initial data.

Our main result of this section is the following.
\begin{theorem} \label{main-r1}
    Let $\mathbb{T}^3$ be a three-dimensional flat torus. Let us consider $(\varrho,u)$  a weak solution to the Navier-Stokes-Korteweg system (\ref{cont_cap}) - (\ref{mom_cap}) in the sense of Definition \ref{weak-def}.
    Let $(r,U)$ be the  strong solution emanating from the same initial data and such that $(r,U,V,W)$ satisfy \eqref{cont-r} - \eqref{mom-V}. Then, $(\varrho, u) = (r, U)$ in $(0, T) \times \mathbb{T}^3$, which corresponds to a weak-strong uniqueness property.
\end{theorem}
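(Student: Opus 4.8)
The plan is to apply the relative energy inequality (\ref{rei-res}) with the smooth reference state chosen to be the strong solution $(r,W,V)$, and then to close a Gronwall argument. Since the weak and strong solutions emanate from the same initial data, the initial densities coincide, $\varrho_0=r_0$, so that $\nabla\varrho_0=\nabla r_0$ and $v_0=2\nu\nabla\log\varrho_0=2\nu\nabla\log r_0=V_0$; likewise the common initial velocity gives $w_0=u_0+v_0=U_0+V_0=W_0$. Hence $\mathcal{E}(0,\cdot)=0$. The goal is therefore to bound the right-hand side of (\ref{rei-res}) by $C\int_0^T\mathcal{E}(t,\cdot)\,dt$ together with non-positive dissipative contributions that may be discarded; Gronwall's lemma then forces $\mathcal{E}(T,\cdot)=0$ for every $T$, whence $\varrho=r$, $w=W$, $v=V$, and consequently $u=w-v=W-V=U$.

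First I would substitute the strong-solution equations (\ref{mom-W}) and (\ref{mom-V}), divided by $r$ (which is licit since $r$ is bounded away from zero by (\ref{reg_class})), to eliminate $\partial_t W$ and $\partial_t V$ from the terms $\int\!\int\varrho\,\partial_t W\cdot(W-w)$ and $\int\!\int\varrho\,\partial_t V\cdot(V-v)$. The convective contributions then regroup into expressions involving $\nabla W\,(u-U)$ and $\nabla V\,(u-U)$; using $u-U=(w-W)-(v-V)$ these are seen to be quadratic in the relative velocities, hence controlled by $\|\nabla W\|_{L^\infty}$ and $\|\nabla V\|_{L^\infty}$ times $\mathcal{E}$ after Young's inequality. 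Next I would treat the pressure. Using $\nabla H'(r)=H''(r)\nabla r=\tfrac{p'(r)}{r}\nabla r=\tfrac1r\nabla p(r)$, the pressure gradient produced by (\ref{mom-W}) combines with the terms $-\varrho u\cdot\nabla H'(r)$, $-\partial_t(H'(r))(\varrho-r)$ and $-p(\varrho)\,\mathrm{div}\,W$ already present in (\ref{rei-res}); invoking the continuity equations for both $\varrho$ and $r$, these collapse into a remainder of relative-pressure type $p(\varrho|r)=p(\varrho)-p(r)-p'(r)(\varrho-r)$, which is controlled by $\int H(\varrho|r)$ and hence by $\mathcal{E}$.

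The capillary contributions—the term $2\kappa^2\int\!\int(\varrho\,\partial_t\Delta r+\varrho u\cdot\nabla\Delta r-\partial_t r\,\Delta r)$, the Korteweg term $-2\kappa^2\int\!\int(\nabla\varrho\cdot\nabla(\varrho\,\mathrm{div}\,W)-\tfrac12|\nabla\varrho|^2\,\mathrm{div}\,W+\nabla\varrho\otimes\nabla\varrho:\nabla W)$, and the $2\kappa^2\nabla\Delta r$ piece arising from substituting (\ref{mom-W})—are reorganized by integration by parts so as to isolate the relative capillary dissipation $-4\nu\kappa^2\int|\Delta\varrho-\Delta r|^2$ (after completing the square against $-4\nu\kappa^2\int|\Delta\varrho|^2$), plus remainders that, using the bounds on $\nabla r$ and $\nabla^2 r$ in (\ref{reg_class}), are controlled by $\kappa^2\|\nabla\varrho-\nabla r\|^2_{L^2}\le C\mathcal{E}$.

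The main obstacle is the viscous block. Because the viscosity is density dependent there is no $H^1$ bound on the velocity, and the dissipation is available only through the tensors $\mathcal{S}(u)$ and $\mathcal{A}(w)$ defined weakly by (\ref{TT}). I would substitute the viscous divergence terms from (\ref{mom-W}) and (\ref{mom-V}) into the cross terms $2\nu\int\!\int\varrho(\nabla^t v:\nabla V+\nabla w:\nabla W)-2\nu\int\!\int\varrho(\nabla v:\nabla W+\nabla^t w:\nabla V)$ together with the pieces produced by the $\partial_t W,\partial_t V$ substitutions, and then complete the square against $-2\nu\int(|\mathcal{S}(u)|^2+|\mathcal{A}(w)|^2)$ so as to produce the non-positive relative dissipations $-2\nu\int|\mathcal{S}(u)-\mathcal{S}(U)|^2$ and $-2\nu\int|\mathcal{A}(w)-\mathcal{A}(W)|^2$. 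The delicate point is that all cross products must be re-expressed through the $\sqrt{\varrho}\,\mathcal{T}$-variables and matched consistently with Remarks \ref{visc-und} and \ref{visc-rei}, so that the residual viscous terms are genuinely bounded by $\mathcal{E}$—using the $L^\infty$ control of $\nabla W,\nabla V$ from (\ref{reg_class})—rather than by an uncontrolled velocity norm. Once this completion of squares is carried out, collecting all remainders yields $\mathcal{E}(T,\cdot)\le C\int_0^T\mathcal{E}(t,\cdot)\,dt$, and Gronwall's lemma concludes the proof.
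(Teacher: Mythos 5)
Your outline coincides with the paper's actual proof: the reference state in (\ref{rei-res}) is the strong solution of the augmented system, $\partial_t W$ and $\partial_t V$ are eliminated by testing (\ref{mom-W})--(\ref{mom-V}) against $\frac{\varrho}{r}(W-w)$ and $\frac{\varrho}{r}(V-v)$, the convective remainders are made quadratic via $u-U=(w-W)-(v-V)$, the viscous block is completed to squares in the $\sqrt{\varrho}\,\mathcal{T}$-variables (the paper's relative dissipation is $2\nu\int\varrho\,|\mathcal{S}(u)/\sqrt{\varrho}-D(U)|^2+|\mathcal{A}(w)/\sqrt{\varrho}-A(W)|^2$, i.e.\ relative to $\sqrt{\varrho}\,D(U)$ rather than to $\mathcal{S}(U)$, and it uses $A(V)=0$), the capillary block is completed to $-4\nu\kappa^2\int|\Delta\varrho-\Delta r|^2$, and Gronwall closes the argument from $\mathcal{E}(0,\cdot)=0$. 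On the capillary side your prediction is correct, though the mechanism is not mere integration by parts: the cross term $8\nu\kappa^2\int\Delta\varrho\,\Delta r$ needed for the square is produced by $u-w=-v=-2\nu\nabla\log\varrho$ together with the heat-type identity $\partial_t r+\mathrm{div}(rW)=2\nu\Delta r$, and the residual $2\kappa^2\int W(\varrho-r)\nabla\Delta(r-\varrho)$ is recast through the Korteweg-tensor identity into terms quadratic in $\nabla(\varrho-r)$ and $(\varrho-r)$.

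The genuine gap is in your pressure block. The terms you list do \emph{not} collapse into the relative pressure alone: since the transport velocity is $u=w-v$ and $\mathrm{div}\,W=\mathrm{div}\,U+\mathrm{div}\,V$, what survives beside $\int\left(p'(r)(\varrho-r)-p(\varrho)+p(r)\right)\mathrm{div}\,U$ (which is indeed $\lesssim\int H(\varrho|r)$) are the zeroth-order leftovers $\int\varrho\,\nabla H'(r)\cdot(v-V)$ and $-\int p(\varrho)\,\mathrm{div}\,V$. These are $O(1)$, not $O(\mathcal{E})$: Cauchy--Schwarz only yields $C\sqrt{\mathcal{E}}$, which does not close a Gronwall argument starting from $\mathcal{E}(0)=0$. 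Moreover, your plan to ``discard the non-positive dissipative contributions'' makes this unfixable, because the paper must \emph{retain} the dissipation $-2\nu\int\frac{p'(\varrho)}{\varrho}|\nabla\varrho|^2$, expand it as in (\ref{visc-1}), and use the resulting extra terms to cancel those leftovers exactly, at the price of the signless remainder $2\nu\int\varrho\left(p'(\varrho)\nabla\log\varrho-p'(r)\nabla\log r\right)\cdot\left(\nabla\log\varrho-\nabla\log r\right)$. That remainder is then handled by the separate decomposition (\ref{press-contr}): the positive piece $\varrho p'(\varrho)|\nabla\log\varrho-\nabla\log r|^2$ is dropped, the middle term is integrated by parts against $\Delta\log r$, and the last one is absorbed using $\varrho(p'(\varrho)-p'(r))-p''(r)(\varrho-r)r\approx H(\varrho|r)$ together with the $H'(r)$-regularity in (\ref{reg_class}). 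Without this cancellation-plus-decomposition step, the bound $\mathcal{E}(T,\cdot)\leq C\int_0^T\mathcal{E}(t,\cdot)\,dt$ cannot be obtained, so your outline fails precisely there.
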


\begin{proof}

We take the strong solutions of the augmented system \eqref{cont-r}-\eqref{mom-V} as test function in the relative entropy \eqref{rei-res} and we multiply (\ref{mom-W}) and (\ref{mom-V}) by $\displaystyle{\frac{\varrho}{r}(W-w)}$ and $\displaystyle{\frac{\varrho}{r}(V-v)}$, respectively. We obtain
$$
\int_0^T\!\! \int_{\mathbb{T}^3} \varrho \partial_t W (W-w) dxdt =
$$
$$
-  \int_0^T\!\! \int_{\mathbb{T}^3} \varrho \nabla W \cdot U (W-w) dxdt
- \int_0^T\!\! \int_{\mathbb{T}^3} \frac{\varrho}{r} \nabla p(r) (W-w) dxdt
$$
$$    + \int_0^T\!\! \int_{\mathbb{T}^3}
    \Big[
    2\nu \frac{\varrho}{r} \textrm{div} (r \nabla W) (W-w)
    - 2\nu \frac{\varrho}{r} \textrm{div} (r \nabla V) (W-w)
   $$
   \begin{equation} \label{mom-W-test}
 + 2\kappa^2
    \varrho \nabla \Delta r (W-w)
    \Big]
    dxdt
\end{equation}
and
$$
\int_0^T \int_{\mathbb{T}^3}
\varrho \partial_t V (V-v)
dxdt
$$
$$    =
    \int_0^T \int_{\mathbb{T}^3}
    \Big[
    - \varrho \nabla V \cdot U (V-v) 
    + 2\nu \frac{\varrho}{r}\textrm{div} (r \nabla^t V) (V-v)
   $$
   \begin{equation} \label{mom-V-test}
 - 2\nu \frac{\varrho}{r} \textrm{div} (r \nabla^t W) (V-v)
    \Big]
    dxdt.
\end{equation}
Now, we multiply (\ref{cont-r}) by $H'(r)$ and we integrate by parts and we obtain
\begin{equation}
\label{p1}
\int_0^T \int_{\mathbb{T}^3}
\left[
H'(r)\partial_t r - r \nabla (H'(r)) \cdot U 
\right]dxdt
= 0.
\end{equation}
By using \eqref{hpressure} we can rewrite \eqref{p1} as  
\begin{equation} \label{obs-0}
\int_0^T \int_{\mathbb{T}^3}
H'(r)\partial_t r dxdt= - \int_0^T \int_{\mathbb{T}^3} p(r) \textrm{div} U dxdt.
\end{equation}
Plugging (\ref{mom-W-test}), (\ref{mom-V-test}) and (\ref{obs-0}) in (\ref{step-3}), we have
$$
\mathcal{E}(T,\cdot) - \mathcal{E}(0,\cdot)
\leq
\int_0^T \int_{\mathbb{T}^3} 
\varrho \nabla W (u-U) \cdot (W-w)
dxdt
$$
$$
+ \int_0^T \int_{\mathbb{T}^3} 
\varrho \nabla V (u-U)\cdot(V-v)
dxdt
$$
$$
+ 2\nu \int_0^T \int_{\mathbb{T}^3}
\frac{\varrho}{r} 
\left(
\textrm{div} (r \nabla W) \cdot (W-w)
+ 
\textrm{div}(r \nabla^t
V) \cdot (V-v)
\right)
dxdt
$$
$$
- 2\nu \int_0^T \int_{\mathbb{T}^3}
\frac{\varrho}{r} \left(
\textrm{div} (r \nabla^t W) (V-v)
+ \textrm{div} (r \nabla V) (W-w)
\right)
dxdt
$$
$$
+ 2\nu \int_0^T \int_{\mathbb{T}^3} \varrho \left( \nabla^t v : \nabla V + \nabla w: \nabla W \right)
dxdt
- 2\nu \int_0^T \int_{\mathbb{T}^3} \varrho
\left(
\nabla v:\nabla W
+  \nabla^t w: \nabla
V
\right)
dxdt
$$
$$
+ 2\kappa^2 \int_0^T \int_{\mathbb{T}^3} \left( \varrho \partial_t \Delta r + \varrho u \cdot \nabla \Delta r - \partial_t r \Delta r + 
\varrho \nabla \Delta r (W-w)\right)
dxdt
$$
$$
- 2\kappa^2 \int_0^T \int_{\mathbb{T}^3} 
\left( 
\nabla\varrho \cdot \nabla( \varrho \textrm{div} W) 
- \frac{1}{2} |\nabla \varrho|^2 \textrm{div} W
+ \nabla \varrho \otimes \nabla \varrho : \nabla W
\right)
dxdt
$$
$$
+ 
\int_0^T \int_{\mathbb{T}^3} \left(  p(r) \textrm{div} U - p(\varrho) \textrm{div} W
\right)
dxdt
$$
$$
-2\nu \int_0^T \int_{\mathbb{T}^3} \left( |\mathcal{S}(u)|^2 + |\mathcal{A}(w)|^2   + \frac{p'(\varrho)}{\varrho} |\nabla \varrho|^2 \right)dxdt 
-4\nu \kappa^2 \int_0^T \int_{\mathbb{T}^3} |\Delta \varrho|^2  dxdt
$$
$$
- \int_0^T \int_{\mathbb{T}^3} \frac{\varrho}{r} \nabla (p(r)) \cdot (W-w)
dxdt
- \int_0^T \int_{\mathbb{T}^3} 
\partial_t (H'(r))(\varrho-r)
dxdt
$$
\begin{equation} \label{step-4}
- \int_0^T \int_{\mathbb{T}^3} \varrho u \nabla(H'(r))
dxdt
+ \int_0^T \int_{\mathbb{T}^3} H'(r) \partial_t r
dxdt.
\end{equation}
Now, we compute
$$
\partial_t (H'(r)) = - p'(r)\textrm{div}U
- H''(r)\nabla(r)\cdot U 
= - p'(r)\textrm{div}U - \nabla(H'(r)) \cdot U.
$$
Consequently, the last four terms in
(\ref{step-4}) can be rewritten as follows
$$
- \int_0^T \int_{\mathbb{T}^3} \frac{\varrho}{r} \nabla (p(r)) \cdot (W-w)
dxdt
- \int_0^T \int_{\mathbb{T}^3} 
\partial_t (H'(r))(\varrho-r)
dxdt
$$
$$
- \int_0^T \int_{\mathbb{T}^3} \varrho u \nabla(H'(r)) dxdt
+ \int_0^T \int_{\mathbb{T}^3} H'(r) \partial_t r
dxdt
$$
\begin{equation} \label{rew}
    = \int_0^T \int_{\mathbb{T}^3} \varrho \nabla (H'(r)) \cdot (v - V)
    dxdt
    + \int_0^T \int_{\mathbb{T}^3} p'(r) \textrm{div}U (\varrho - r)dxdt.
\end{equation}
We end up with
$$
\mathcal{E}(T,\cdot) - \mathcal{E}(0,\cdot)
\leq
\int_0^T \int_{\mathbb{T}^3} 
\varrho \nabla W (u-U) \cdot (W-w)
dxdt
$$
$$
+ \int_0^T \int_{\mathbb{T}^3} 
\varrho \nabla V (u-U)\cdot(V-v)
dxdt
$$
$$
+ 2\nu \int_0^T \int_{\mathbb{T}^3}
\frac{\varrho}{r} 
\left(
\textrm{div} (r \nabla W) \cdot (W-w)
+ 
\textrm{div}(r \nabla^t
V) \cdot (V-v)
\right)
dxdt
$$
$$
- 2\nu \int_0^T \int_{\mathbb{T}^3}
\frac{\varrho}{r} \left(
\textrm{div} (r \nabla^t W) (V-v)
+ \textrm{div} (r \nabla V) (W-w)
\right)
dxdt
$$
$$
+ 2\nu \int_0^T \int_{\mathbb{T}^3} \varrho \left( \nabla^t v : \nabla V + \nabla w: \nabla W  \right)
dxdt
- 2\nu \int_0^T \int_{\mathbb{T}^3} \varrho
\left(
\nabla v:\nabla W
+  \nabla^t w: \nabla
V
\right)
dxdt
$$
$$
+ 2\kappa^2 \int_0^T \int_{\mathbb{T}^3} \left( \varrho \partial_t \Delta r + \varrho u \cdot \nabla \Delta r - \partial_t r \Delta r + 
\varrho \nabla \Delta r (W-w)\right)
dxdt
$$
$$
- 2\kappa^2 \int_0^T \int_{\mathbb{T}^3} 
\left( 
\nabla\varrho \cdot \nabla( \varrho \textrm{div} W) 
- \frac{1}{2} |\nabla \varrho|^2 \textrm{div} W
+ \nabla \varrho \otimes \nabla \varrho : \nabla W
\right)
dxdt
$$
$$
+ 
\int_0^T \int_{\mathbb{T}^3} \left( \left(  
p'(r)(\varrho - r) - p(\varrho) + p(r) \right) \textrm{div}U
\right)
dxdt
$$
$$
-2\nu \int_0^T \int_{\mathbb{T}^3} \left( |\mathcal{S}(u)|^2 + |\mathcal{A}(w)|^2   + \frac{p'(\varrho)}{\varrho} |\nabla \varrho|^2 \right)dxdt 
-4\nu \kappa^2 \int_0^T \int_{\mathbb{T}^3} |\Delta \varrho|^2  dxdt
$$
\begin{equation} \label{step-5}
- \int_0^T \int_{\mathbb{T}^3} \left(\varrho \nabla (H'(r)) \cdot (V - v)
+ p(\varrho) \textrm{div}
V \right) dxdt.
\end{equation}
Now, we handle the viscous terms in \eqref{step-5}. First we notice that
$$
-2\nu \int_0^T \int_{\mathbb{T}^3} 
\left(
|\mathcal{S}(u)|^2
+
|\mathcal{A}(w)|^2 + \frac{p'(\varrho)}{\varrho}|\nabla \varrho|^2
\right)
dxdt
$$
$$
=-2\nu \int_0^T \int_{\mathbb{T}^3}
\varrho
\left(
\left|
\frac{\mathcal{S}(u)}{\sqrt{\varrho}}
-D(U)
\right|^2
+
\left|
\frac{\mathcal{A}(w)}{\sqrt{\varrho}}
-A(W)
\right|^2
\right)
dxdt
$$
$$
+2\nu \int_0^T \int_{\mathbb{T}^3}
\left(
\varrho|A(W)|^2
-\sqrt{\varrho}\mathcal{A}(w)A(W)
+\varrho|D(U)|^2
-\sqrt{\varrho}\mathcal{S}(u)D(U)
\right)
dxdt
$$
$$
-2\nu \int_0^T \int_{\mathbb{T}^3}
\left(
\sqrt{\varrho}\mathcal{A}(w)A(W)
+\sqrt{\varrho}\mathcal{S}(u)D(U)
\right)
dxdt
$$
$$
-2\nu \int_0^T \int_{\mathbb{T}^3}
\varrho(p'(\varrho)\nabla\log\varrho-p'(r)\nabla\log r)\cdot
(\nabla \log \varrho - \nabla \log r)
dxdt
$$
\begin{equation} \label{visc-1}
+2\nu \int_0^T \int_{\mathbb{T}^3}
\frac{\varrho}{r}p'(r)
\nabla r
\left(\frac{\nabla r}{r}-\frac{\nabla \varrho}{\varrho}\right)
-2\nu \int_0^T \int_{\mathbb{T}^3}
p'(\varrho)\nabla \varrho \cdot \frac{\nabla r}{r}
dxdt.
\end{equation}
The following viscous terms 
$$
2\nu \int_0^T \int_{\mathbb{T}^3} \varrho \left( \nabla^t v : \nabla V + \nabla w: \nabla W  \right)dxdt
- 2\nu \int_0^T \int_{\mathbb{T}^3} \varrho
\left(
\nabla v:\nabla W
+  \nabla^t w: \nabla
V
\right)
dxdt
$$
should be interpreted as:
$$
2\nu\int_0^T \int_{\Omega} \varrho \nabla^t v : \nabla V dxdt+2\nu \int_0^T \int_{\Omega}
\varrho \nabla w: \nabla W dxdt
$$
$$
= 2\nu\int_0^T \int_{\Omega}
\left(
\sqrt{\varrho}\mathcal{S}(v):D(V)
+\sqrt{\varrho}\mathcal{S}(v):A(V)
-\sqrt{\varrho}\mathcal{A}(v):D(V)
-\sqrt{\varrho}\mathcal{A}(v)A(V)
\right)
dxdt;
$$
$$
+ 2\nu\int_0^T \int_{\Omega}
\left(
\sqrt{\varrho}\mathcal{S}(w):D(W)
+\sqrt{\varrho}\mathcal{S}(w):A(W)
+\sqrt{\varrho}\mathcal{A}(w):D(W)
+\sqrt{\varrho}\mathcal{A}(w)A(W)
\right)
dxdt;
$$
and

$$
2\nu \int_0^T \int_{\Omega}
\varrho \nabla v: \nabla W dxdt+2\nu\int_0^T \int_{\Omega} \varrho \nabla^t w : \nabla V dxdt
$$
$$
+2\nu\int_0^T \int_{\Omega}
\left(
\sqrt{\varrho}\mathcal{S}(v):D(W)
+\sqrt{\varrho}\mathcal{S}(v):A(W)
+\sqrt{\varrho}\mathcal{A}(v):D(W)
+\sqrt{\varrho}\mathcal{A}(v)A(W)
\right)
dxdt;
$$
$$
= 2\nu\int_0^T \int_{\Omega}
\left(
\sqrt{\varrho}\mathcal{S}(w):D(V)
+\sqrt{\varrho}\mathcal{S}(w):A(V)
-\sqrt{\varrho}\mathcal{A}(w):D(V)
-\sqrt{\varrho}\mathcal{A}(w)A(V)
\right)
dxdt.
$$
We conclude analyzing the remaining viscous terms
$$
I_{1}=2\nu \int_0^T \int_{\mathbb{T}^3}
\frac{\varrho}{r} 
\left(
\textrm{div} (r \nabla W) \cdot (W-w)
+ 
\textrm{div}(r \nabla^t
V) \cdot (V-v)
\right)
dxdt
$$
$$
I_{2}=- 2\nu \int_0^T \int_{\mathbb{T}^3}
\frac{\varrho}{r} \left(
\textrm{div} (r \nabla^t W) \cdot (V-v)
+ \textrm{div} \cdot (r \nabla V) (W-w)
\right)
dxdt.
$$
By parts integration gives
$$
I_{1}- 2\nu \int_0^T \int_{\mathbb{T}^3}
\frac{\varrho}{r} 
\left(
(r \nabla W) \cdot \nabla (W-w)
+ 
(r \nabla^t V) \cdot \nabla (V-v)
\right)
dxdt
$$
\begin{equation} \label{visc-11}
- 2\nu \int_0^T \int_{\mathbb{T}^3}
\nabla \left(
\frac{\varrho}{r} \right)
\left(
(r \nabla W) \cdot  (W-w)
+ 
(r \nabla^t
V) \cdot  (V-v)
\right)
dxdt
\end{equation}
and
$$
I_{2}=2\nu \int_0^T \int_{\mathbb{T}^3}
\frac{\varrho}{r} \left(
 (r \nabla^t W) \cdot \nabla (V-v)
+ (r \nabla V) \cdot \nabla (W-w)
\right)
dxdt
$$
\begin{equation} \label{visc-22}
+ 2\nu \int_0^T \int_{\mathbb{T}^3}
\nabla \left(
\frac{\varrho}{r} \right)
\left(
 (r \nabla^t W) \cdot (V-v)
+ (r \nabla V) \cdot (W-w)
\right)
dxdt.
\end{equation}
Taking into account that $A(V)=0$ in  the terms (\ref{visc-11}) and (\ref{visc-22}) we end up with,
$$
I_{1}+I_{2}=-2\nu \int_0^T \int_{\mathbb{T}^3}
\left[
\varrho|D(W)|^2
-\sqrt{\varrho}\mathcal{S}(w)D(W)
+\varrho|A(W)|^2
-\sqrt{\varrho}\mathcal{A}(w)A(W)
\right]
dxdt
$$
$$
-2\nu \int_0^T \int_{\mathbb{T}^3}
\left[
\varrho|D(V)|^2
-\sqrt{\varrho}\mathcal{S}(v)D(V)
\right]
dxdt
$$
\begin{equation} \label{visc-11-rew1}
- 2\nu \int_0^T \int_{\mathbb{T}^3}
\nabla \left(
\frac{\varrho}{r} \right)
\left(
(r (D(W)+A(W))) \cdot  (W-w)
+ 
(r \nabla^t
V) \cdot  (V-v)
\right)
dxdt,
\end{equation}
$$
2\nu \int_0^T \int_{\mathbb{T}^3}
\left[
\varrho D(W)D(V)
-\sqrt{\varrho}\mathcal{S}(v)D(W)
+\sqrt{\varrho}\mathcal{A}(v)A(W)
\right]
dxdt
$$
$$
+ 2\nu \int_0^T \int_{\mathbb{T}^3}
\left[
\varrho D(W)D(V)
-\sqrt{\varrho}\mathcal{S}(w)D(V)
\right]
dxdt
$$
\begin{equation} \label{visc-22-rew2}
+ 2\nu \int_0^T \int_{\mathbb{T}^3}
\nabla \left(
\frac{\varrho}{r} \right)
\left(
(r \nabla^t W) \cdot (V-v)
+ (r \nabla V) \cdot (W-w)
\right)
dxdt.
\end{equation}

\noindent
Now, noting that
$$
2\nu \int_0^T \int_{\mathbb{T}^3} \frac{\varrho}{r} p'(r) \nabla r \cdot \left( \frac{\nabla r}{r} - \frac{\nabla \varrho}{\varrho} \right) dxdt
- 2\nu \int_0^T \int_{\mathbb{T}^3} p'(\varrho) \nabla \varrho \cdot \frac{\nabla r}{r} dxdt
$$
\begin{equation*} 
- \int_0^T \int_{\mathbb{T}^3} 
\left(
\varrho \nabla (H'(r)) \cdot (V - v)
+ p(\varrho) \textrm{div}
V
\right)
dxdt=0,
\end{equation*}
and observing again that $A(V)=0$ and that the scalar product between symmetric and skew-symmetric second-order tensors is null, we rewrite (\ref{step-5}) as follows
$$
\mathcal{E}(T,\cdot) - \mathcal{E}(0,\cdot)
$$
$$
+2\nu \int_0^T \int_{\mathbb{T}^3}
\varrho
\left(
\left|
\frac{\mathcal{S}(u)}{\sqrt{\varrho}}
-D(U)
\right|^2
+
\left|
\frac{\mathcal{A}(w)}{\sqrt{\varrho}}
-A(W)
\right|^2
\right)
dxdt
+4\nu \kappa^2 \int_0^T \int_{\mathbb{T}^3} |\Delta \varrho|^2  dxdt
$$
$$
+2\nu \int_0^T \int_{\mathbb{T}^3}
\varrho(p'(\varrho)\nabla\log\varrho-p'(r)\nabla\log r)\cdot
(\nabla \log \varrho - \nabla \log r)
dxdt
$$
$$
\leq
\int_0^T \int_{\mathbb{T}^3} \varrho \nabla W (u-U) \cdot (W-w)dxdt+ \int_0^T \int_{\mathbb{T}^3} \varrho \nabla V (u-U)\cdot(V-v)dxdt
$$
$$
- 2\nu \int_0^T \int_{\mathbb{T}^3}\nabla \left(\frac{\varrho}{r} \right)\left((r (D(W)+A(W))) \cdot  (W-w)+ (r \nabla^t V) \cdot  (V-v)\right)dxdt
$$
$$
+ 2\nu \int_0^T \int_{\mathbb{T}^3}\nabla \left(\frac{\varrho}{r} \right)\left(r \nabla^t W) \cdot (V-v)+ (r \nabla V) \cdot (W-w)\right)dxdt
$$
$$
+ 2\kappa^2 \int_0^T \int_{\mathbb{T}^3} \left( \varrho \partial_t \Delta r + \varrho u \cdot \nabla \Delta r - \partial_t r \Delta r + 
\varrho \nabla \Delta r (W-w)\right)
dxdt
$$
$$
- 2\kappa^2 \int_0^T \int_{\mathbb{T}^3} 
\left( 
\nabla\varrho \cdot \nabla( \varrho \textrm{div} W) 
- \frac{1}{2} |\nabla \varrho|^2 \textrm{div} W
+ \nabla \varrho \otimes \nabla \varrho : \nabla W
\right)
dxdt
$$
\begin{equation} \label{step-6}
+ 
\int_0^T \int_{\mathbb{T}^3} \left( \left(  
p'(r)(\varrho - r) - p(\varrho) + p(r) \right) \textrm{div}U
\right)
dxdt.
\end{equation}

\noindent
Now we consider the capillary terms.  First, we have (formally)
$$
- 2\kappa^2 \int_0^T \int_{\mathbb{T}^3}\left( 
\nabla\varrho \cdot \nabla( \varrho \textrm{div} W) - \frac{1}{2} |\nabla \varrho|^2 \textrm{div} W+ \nabla \varrho \otimes \nabla \varrho : \nabla W\right)
dxdt
$$
$$
= -2\kappa^2
\int_0^T \int_{\mathbb{T}^3}
\varrho W \nabla \Delta \varrho
dxdt.
$$
Consequently, 
$$
2\kappa^2 \int_0^T \int_{\mathbb{T}^3} \left( \varrho \partial_t \Delta r + \varrho u \cdot \nabla \Delta r - \partial_t r \Delta r + 
\varrho \nabla \Delta r (W-w) \right)
dxdt
$$
$$
- 2\kappa^2 \int_0^T \int_{\mathbb{T}^3}\left( \nabla\varrho \cdot \nabla( \varrho \textrm{div} W) - \frac{1}{2} |\nabla \varrho|^2 \textrm{div} W
+ \nabla \varrho \otimes \nabla \varrho : \nabla W\right)dxdt
$$
$$
= 2\kappa^2 \int_0^T \int_{\mathbb{T}^3} 
\left(
\varrho \partial_t \Delta r
- \partial_t r \Delta r
+\varrho (u-w) \cdot \nabla \Delta r
+\varrho W \nabla \Delta (r-\varrho)
\right)
dxdt.
$$
$$
=2\kappa^2 \int_0^T \int_{\mathbb{T}^3} \Big(\varrho \partial_t \Delta r
- \partial_t r \Delta r
+\varrho (u-w) \cdot \nabla \Delta r
$$
\begin{equation}
\label{p2}
+r W \nabla \Delta (r-\varrho)
+ W(\varrho - r) \nabla \Delta (r-\varrho)\Big)
dxdt.
\end{equation}
After by parts integration and by recalling that
$\partial_t r + \text{div}(rW)=2\nu\Delta r$, we have \eqref{p2} is equal to
\begin{equation}
2\kappa^2 \int_0^T \int_{\mathbb{T}^3} 
\left(
\varrho (u-w) \cdot \nabla \Delta r
- 2\nu \Delta r \Delta (r-\varrho)
+ W(\varrho - r) \nabla \Delta (r-\varrho)
\right)
dxdt.
\label{p3}
\end{equation}
From the definition of $v = 2\nu \nabla \log \varrho$, we rewrite \eqref{p3} as follows
$$
 - 2\kappa^2 \int_0^T \int_{\mathbb{T}^3} \varrho v \cdot \nabla \Delta r
dxdt
- 4\kappa^2\nu \int_0^T \int_{\mathbb{T}^3}  \Delta r \Delta (r-\varrho)
dxdt
+ 2\kappa^2\int_0^T \int_{\mathbb{T}^3} W(\varrho - r) \nabla \Delta (r-\varrho)
dxdt
$$
$$
= - 4\kappa^2\nu \int_0^T \int_{\mathbb{T}^3} \nabla \varrho \cdot \nabla \Delta r
dxdt
- 4\kappa^2\nu \int_0^T \int_{\mathbb{T}^3}  \Delta r \Delta (r-\varrho)
dxdt
+ 2\kappa^2 \int_0^T \int_{\mathbb{T}^3} W(\varrho - r) \nabla \Delta (r-\varrho)
dxdt.
$$
$$
=4\kappa^2\nu \int_0^T \int_{\mathbb{T}^3} \Delta \varrho \cdot \Delta r
dxdt
- 4\kappa^2\nu \int_0^T \int_{\mathbb{T}^3}  \Delta r \Delta (r-\varrho)
dxdt
+ 2\kappa^2 \int_0^T \int_{\mathbb{T}^3} W(\varrho - r) \nabla \Delta (r-\varrho)
dxdt
$$
\begin{equation}
=  8\kappa^2\nu \int_0^T \int_{\mathbb{T}^3} \Delta \varrho \cdot \Delta r
dxdt
- 4\kappa^2\nu \int_0^T \int_{\mathbb{T}^3}  |\Delta r|^2
dxdt
+ 2\kappa^2 \int_0^T \int_{\mathbb{T}^3} W(\varrho - r) \nabla \Delta (r-\varrho)
dxdt.
\label{p4}
\end{equation}
Finally, the last term in \eqref{p4} can be handled as
$$
2\kappa^2 \int_0^T \int_{\mathbb{T}^3} W(\varrho - r) \nabla \Delta (r-\varrho)
dxdt
$$
$$
= 2\kappa^2 \int_0^T \int_{\mathbb{T}^3}
W \text{div}
\left[
(\varrho-r)\Delta(\varrho-r)\mathbb{I}-\frac{1}{2}|\nabla(\varrho-r)|^2\mathbb{I}
+\nabla(\varrho-r)\otimes\nabla(\varrho-r)
\right]
dxdt
$$
$$
= -2\kappa^2 \int_0^T \int_{\mathbb{T}^3}
\nabla(\varrho-r)\otimes\nabla(\varrho-r):\nabla W
dxdt
+ 3\kappa^2
\int_0^T \int_{\mathbb{T}^3}
\text{div}W|\nabla(\varrho-r)|^2
dxdt
$$
$$
+2\kappa^2
\int_0^T \int_{\mathbb{T}^3}
(\varrho-r)
\nabla\text{div}W
\nabla(\varrho-r)
dxdt.
$$
Consequently, we can rewrite the relation (\ref{step-6}) as follows
$$
\mathcal{E}(T,\cdot) - \mathcal{E}(0,\cdot)
$$
$$
+2\nu \int_0^T \int_{\mathbb{T}^3}\varrho\left(\left|\frac{\mathcal{S}(u)}{\sqrt{\varrho}}-D(U)\right|^2+\left|\frac{\mathcal{A}(w)}{\sqrt{\varrho}}
-A(W)\right|^2\right)dxdt+4\nu \kappa^2 \int_0^T \int_{\mathbb{T}^3} |\Delta \varrho-\Delta r|^2  dxdt
$$
$$
+2\nu \int_0^T \int_{\mathbb{T}^3}\varrho(p'(\varrho)\nabla\log\varrho-p'(r)\nabla\log r)\cdot(\nabla \log \varrho - \nabla \log r)dxdt
$$
$$
\leq\int_0^T \int_{\mathbb{T}^3} \varrho \nabla W (u-U) \cdot (W-w)dxdt+ \int_0^T \int_{\mathbb{T}^3} 
\varrho \nabla V (u-U)\cdot(V-v)dxdt
$$
$$
- 2\nu \int_0^T \int_{\mathbb{T}^3}\nabla \left(\frac{\varrho}{r} \right)\left((r (D(W)+A(W))) \cdot  (W-w)+ (r \nabla^t V) \cdot  (V-v)\right)dxdt
$$
$$
+ 2\nu \int_0^T \int_{\mathbb{T}^3}\nabla \left(\frac{\varrho}{r} \right)\left((r \nabla^t W) \cdot (V-v)+ (r \nabla V) \cdot (W-w)\right)dxdt
$$
$$
-2\kappa^2 \int_0^T \int_{\mathbb{T}^3}\nabla(\varrho-r)\otimes\nabla(\varrho-r):\nabla Wdxdt+ 3\kappa^2\int_0^T \int_{\mathbb{T}^3}\text{div}W|\nabla(\varrho-r)|^2dxdt
$$
$$
+2\kappa^2\int_0^T \int_{\mathbb{T}^3}(\varrho-r)\nabla\text{div}W\nabla(\varrho-r)dxdt
$$
\begin{equation} \label{step-7}
+ \int_0^T \int_{\mathbb{T}^3} \left( \left(  p'(r)(\varrho - r) - p(\varrho) + p(r) \right) \textrm{div}U\right)dxdt.
\end{equation}
The last step is to work on the remainders term on the right hand side of \eqref{step-7}
\subsubsection*{Convective and pressure terms}
\noindent
We have
$$
\int_0^T \int_{\mathbb{T}^3} 
\varrho \nabla W (u-U) \cdot (W-w)
dxdt
+ \int_0^T \int_{\mathbb{T}^3} 
\varrho \nabla V (u-U)\cdot(V-v)
dxdt
$$
$$
+ 
\int_0^T \int_{\mathbb{T}^3} \left( \left(  
p'(r)(\varrho - r) - p(\varrho) + p(r) \right) \textrm{div}U
\right)
dxdt
$$
$$
\leq
C\int_0^T \mathcal{E}(\cdot,t)dt.
$$
\subsubsection*{Viscous terms}

\noindent
Observing that
$$
2\nu
\nabla \left(\frac{\varrho}{r}\right) =
2\nu
\left[
\frac{\varrho}{r}\left(\nabla \log \varrho - \nabla \log r \right)
\right]
= \frac{\varrho}{r}
\left(
v-V
\right),
$$
we have
$$
- 2\nu \int_0^T \int_{\mathbb{T}^3}
\nabla \left(
\frac{\varrho}{r} \right)
\left(
(r (D(W)+A(W))) \cdot  (W-w)
+ 
(r \nabla^t
V) \cdot  (V-v)
\right)
dxdt
$$
$$
+ 2\nu \int_0^T \int_{\mathbb{T}^3}
\nabla \left(
\frac{\varrho}{r} \right)
\left(
(r \nabla^t W) \cdot (V-v)
+ (r \nabla V) \cdot (W-w)
\right)
dxdt
$$
$$
\leq
C\int_0^T \mathcal{E}(\cdot,t)dt.
$$
\subsubsection*{Capillary terms}

\noindent
We have
$$
-2\kappa^2 \int_0^T \int_{\mathbb{T}^3}
\nabla(\varrho-r)\otimes\nabla(\varrho-r):\nabla W
dxdt
+ 3\kappa^2
\int_0^T \int_{\mathbb{T}^3}
\text{div}W|\nabla(\varrho-r)|^2
dxdt
$$
$$
+2\kappa^2
\int_0^T \int_{\mathbb{T}^3}
(\varrho-r)
\nabla\text{div}W
\nabla(\varrho-r)
dxdt
$$
$$
\leq
C(\kappa)\int_0^T \mathcal{E}(\cdot,t)dt.
$$
\subsubsection*{Relative entropy pressure term}

\noindent
We need to handle the term
$$
2\nu \int_0^T \int_{\mathbb{T}^3}
\varrho(p'(\varrho)\nabla\log\varrho-p'(r)\nabla\log r)\cdot
(\nabla \log \varrho - \nabla \log r)
dxdt
$$
which, a priori, has no sign.
We compute
\begin{multline} \label{press-contr}
\varrho (p'(\varrho)\nabla \log \varrho - p'(r)\nabla \log r)(\nabla \log \varrho - \nabla \log r) \\
= \varrho p'(\varrho) |\nabla \log \varrho - \nabla \log r|^2 + \varrho (p'(\varrho) - p'(r))\nabla \log r (\nabla \log \varrho - \nabla \log r)\\
=\varrho p'(\varrho) |\nabla \log \varrho - \nabla \log r|^2 + \nabla \left[p(\varrho) - p(r) - p'(r)(\varrho - r)\right] \nabla \log r \\ - \left[\varrho(p'(\varrho) - p'(r)) - p''(r)(\varrho - r)r\right]|\nabla\log r|^2.
\end{multline} 
The first term on the right hand side of (\ref{press-contr}) is positive and thus can be neglected. For the remaining terms, integrating by parts, we have
\begin{multline}
\int_0^T \int_{\Omega}
\nabla \left[p(\varrho) - p(r) - p'(r)(\varrho - r)\right] \nabla \log r\\ - \left[\varrho(p'(\varrho) - p'(r)) - p''(r)(\varrho - r)r 
\right] |\nabla\log r|^2 dxdt \\
= - \int_0^T \int_{\Omega} |(p(\varrho) - p(r) - p'(r) (\varrho - r)| |\Delta \log r| dxdt\\
- \int_0^T \int_{\Omega} \left[\varrho (p'(\varrho) - p'(r)) - p''(r)(\varrho - r) r \right] |\nabla \log r|^2 dxdt.
\end{multline}
Now, observing that
$$
\left[\varrho (p'(\varrho) - p'(r)) - p''(r)(\varrho - r)r\right]
\approx
H(\varrho|r)
$$
we end up with 
$$
2\nu \int_0^T \int_{\mathbb{T}^3}
\varrho(p'(\varrho)\nabla\log\varrho-p'(r)\nabla\log r)\cdot
(\nabla \log \varrho - \nabla \log r)
dxdt
$$
$$
\leq C(\nu) \int_0^T \mathcal{E}(t,\cdot) dt.
$$
Finally, by putting all the previous steps together we end up with
$$\mathcal{E}(T,\cdot) - \mathcal{E}(0,\cdot)
\leq C(\nu) \int_0^T \mathcal{E}(t,\cdot) dt.$$
We conclude the proof  by  applying Gronwall's Lemma.
\end{proof}

\subsection{Weak-strong uniqueness for pressureless capillary fluids}
\label{WSpl}
\noindent
As in Section \ref{ws-un} we introduce the augmented pressureless system satisfied by the strong solutions,
\begin{equation} \label{cont-r-pless}
    \partial_t r + \textrm{div}(rU) =0,
\end{equation}
\begin{equation} \label{mom-W-pless}
    r \left( \partial_t W + \nabla W \cdot U \right) - 2\nu \textrm{div} (r \nabla W)
    + 2\nu \textrm{div} (r \nabla V)
    - 2\kappa^2
    r \nabla \Delta r =0,
\end{equation}
\begin{equation} \label{mom-V-pless}
    r \left( \partial_t V + \nabla V \cdot U \right)
    - 2\nu \textrm{div} (r \nabla^t V)
    + 2\nu \textrm{div} (r \nabla^t W) = 0
\end{equation}
with $U = W-V $ and $V=2\nu\nabla\log r$ and $(r,W,V)$ satisfying the same regularity as in (\ref{reg_class}).
Our main result of this section is the following theorem.

\begin{theorem} \label{main-r2}
    Let $\mathbb{T}^3$ be a three-dimensional flat torus. Let us consider $(\varrho,u)$  a weak solution to the pressureless Navier-Stokes-Korteweg system (\ref{cont_cap-press}) - (\ref{mom_cap-press})  in the sense of Definition \ref{weak-def}.
Let $(r,U)$ be the  strong solution emanating from the same initial data and such that $(r,U,V,W)$ satisfy \eqref{cont-r-pless}-\eqref{mom-V-pless} and the regularity class (\ref{reg_class}). Then, $(\varrho, u) = (r, U)$ in $(0, T) \times \mathbb{T}^3$, which corresponds to a weak-strong uniqueness property.
\end{theorem}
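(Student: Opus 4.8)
The plan is to repeat, \emph{mutatis mutandis}, the argument used in the proof of Theorem~\ref{main-r1}, now starting from the pressureless relative entropy inequality \eqref{step-3-pless} and the augmented pressureless system \eqref{cont-r-pless}--\eqref{mom-V-pless}. Since the only structural difference with the barotropic case is the absence of the pressure $p(\varrho)$, every manipulation that does not involve $p$ carries over verbatim, while all pressure-dependent contributions simply drop out. In particular the relative entropy functional is now \eqref{ref-pless}, which contains no $H(\varrho|r)$ term, so the regularity requirements on $\partial_t H'(r)$ and $\nabla H'(r)$ in \eqref{reg_class} play no role here.

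Concretely, I would first insert the strong solution $(r,W,V)$ of \eqref{cont-r-pless}--\eqref{mom-V-pless} as test state into \eqref{step-3-pless}, and multiply the momentum equations \eqref{mom-W-pless} and \eqref{mom-V-pless} by $\tfrac{\varrho}{r}(W-w)$ and $\tfrac{\varrho}{r}(V-v)$ respectively. Plugging the resulting identities into the inequality produces, exactly as in the transition from \eqref{step-3} to \eqref{step-5} but with all $p(\cdot)$ and $H'(\cdot)$ terms absent, an inequality for $\mathcal{E}(T,\cdot)-\mathcal{E}(0,\cdot)$. The viscous terms are then reorganised precisely as in \eqref{visc-1}--\eqref{visc-22-rew2}: using the identity $2\nu\nabla(\varrho/r)=\tfrac{\varrho}{r}(v-V)$, the quadratic contributions assemble into the coercive quantities $\varrho\big|\tfrac{\mathcal{S}(u)}{\sqrt\varrho}-D(U)\big|^2$ and $\varrho\big|\tfrac{\mathcal{A}(w)}{\sqrt\varrho}-A(W)\big|^2$ on the left, while the leftover terms carry the factor $\nabla(\varrho/r)=\tfrac{1}{2\nu}\tfrac{\varrho}{r}(v-V)$ and are therefore controlled by $\mathcal{E}$. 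The capillary terms are treated exactly as in \eqref{p2}--\eqref{p4}, integrating by parts, invoking $\partial_t r+\textrm{div}(rW)=2\nu\Delta r$ and the definition $v=2\nu\nabla\log\varrho$, so as to absorb $4\nu\kappa^2\int_0^T\!\!\int_{\mathbb{T}^3}|\Delta\varrho-\Delta r|^2$ on the left and leave only Korteweg remainders of the form $\nabla(\varrho-r)\otimes\nabla(\varrho-r):\nabla W$, which are again $\mathcal{E}$-controlled through the regularity \eqref{reg_class}.

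At this point every term on the right-hand side is bounded by $C(\nu,\kappa)\int_0^T\mathcal{E}(t,\cdot)\,dt$, exactly as in the convective, viscous and capillary estimates of Section~\ref{ws-un}, with the simplification that neither the ``relative entropy pressure term'' nor the pressure remainder $\big(p'(r)(\varrho-r)-p(\varrho)+p(r)\big)\textrm{div}U$ appears. Gronwall's lemma, together with $\mathcal{E}(0,\cdot)=0$ (same initial data), then forces $\mathcal{E}(T,\cdot)=0$, hence $w=W$, $v=V$ and $\nabla\varrho=\nabla r$ a.e. The one point requiring a little care, and the place where the pressureless setting genuinely differs, is the final identification $\varrho=r$: unlike in Theorem~\ref{main-r1}, the functional \eqref{ref-pless} does not contain $H(\varrho|r)$, so $\|\varrho-r\|$ is not directly controlled. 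However, $\nabla\varrho=\nabla r$ on the torus forces $\varrho-r$ to be spatially constant, and since both densities solve the continuity equation from the same data the total masses coincide, $\int_{\mathbb{T}^3}\varrho=\int_{\mathbb{T}^3}r$; hence the constant vanishes and $\varrho=r$, whence $u=U$. The main technical obstacle, as in the pressure case, is making the formal capillary integrations by parts rigorous at the regularity of Definition~\ref{weak-def}; this is handled by the truncation/approximation procedure underlying Theorem~\ref{main-ex}.
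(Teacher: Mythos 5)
Your proposal follows essentially the same route as the paper's own proof: insert the strong solution of the augmented pressureless system into \eqref{step-3-pless}, multiply \eqref{mom-W-pless} and \eqref{mom-V-pless} by $\frac{\varrho}{r}(W-w)$ and $\frac{\varrho}{r}(V-v)$, reorganise the viscous terms as in \eqref{visc-11}--\eqref{visc-22-rew2} and the capillary terms as in \eqref{p2}--\eqref{p4}, and close with Gronwall's lemma. Your explicit final step recovering $\varrho=r$ from $\nabla\varrho=\nabla r$ together with conservation of mass on the torus is a correct and worthwhile addition, since the pressureless functional \eqref{ref-pless} indeed lacks the $H(\varrho|r)$ term that controls $\varrho-r$ in Theorem \ref{main-r1} (the paper leaves this point implicit in ``following the same arguments''), and the bound $\inf r>0$ from \eqref{reg_class} then lets you pass from $\varrho|w-W|^2=\varrho|v-V|^2=0$ to $u=U$ almost everywhere.
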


As for the capillary flow, it holds the property that any strong solution of (\ref{cont_cap-press}) - (\ref{mom_cap-press}) is a strong solution of the augmented system \eqref{cont-r-pless} - \eqref{mom-V-pless}. According to our knowledge there are no results for global solutions of (\ref{cont_cap-press}) - (\ref{mom_cap-press}) and large general initial data. Hence the solutions of \eqref{cont-r-pless} - \eqref{mom-V-pless} have to be understood local in time and with very regular initial data.
However it is possible to prove the existence of a global strong solution for the capillary pressureless system in the case of  an irrotational initial velocity. Indeed in \cite{CaDo} the authors proved a link between the pressureless system (\ref{cont_cap-press}) - (\ref{mom_cap-press}) and the heat equation for a positive initial density, $\varrho_0 > 0$, $\varrho_0 \in L^1(\mathbb{T}^3)$, and the initial velocity expressed as a gradient of a given potential. It turns out that for those given initial data, the solution of the pressureless system is such that $(\varrho, u) \in C^\infty(0,T; \mathbb{T}^3)$ with $\varrho$ solving the  heat equation. For completeness we recall the result of \cite{CaDo} (see Theorem 2.5 in \cite{CaDo}).

\begin{theorem}
 \label{heat_eq}
	Let $\Omega=\mathbb{T}^{d}$ ($d=2$ or 3) be a periodic domain. Let $\varrho_{0}\in L^{1}\left(\Omega\right)$ with $\varrho_{0}>0$ and continuous. Assume also that $u_{0}=-\nabla\widetilde{\phi}\left(\varrho_{0}\right)$, 
	$\nabla \widetilde{\phi}(\varrho_{0})= \nabla \mu(\varrho_{0})/\varrho_{0}$. Then,  there exists a global weak solution $\left(\varrho,u=-\nabla\widetilde{\phi}\left(\varrho\right)\right)$ of the system (\ref{cont_cap-press}) - (\ref{mom_cap-press}), with $\left(\varrho,u\right)\in C^{\infty}\left(0,T;\Omega\right)$ and $\varrho$ solving the following heat equation almost everywhere	
\begin{equation} \label{heat}
	\partial_{t}\varrho-2\nu\Delta\varrho=0,
	\end{equation}
$$\varrho\left(0,\cdot\right)=\varrho_{0}.$$
\end{theorem}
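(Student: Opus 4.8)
The plan is to look for a solution of \emph{potential type}, in which the velocity is purely osmotic. Concretely I would impose the ansatz
$$u=-\nabla\widetilde{\phi}(\varrho)=-2\nu\nabla\log\varrho,\qquad\text{equivalently}\qquad \varrho u=-2\nu\nabla\varrho,$$
which is consistent with the prescribed irrotational datum $u_0=-\nabla\widetilde{\phi}(\varrho_0)$. Inserting this into the continuity equation (\ref{cont_cap-press}) gives
$$\partial_t\varrho+\textrm{div}(\varrho u)=\partial_t\varrho-2\nu\,\textrm{div}(\varrho\nabla\log\varrho)=\partial_t\varrho-2\nu\Delta\varrho=0,$$
so the mass balance collapses \emph{exactly} onto the heat equation (\ref{heat}). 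The strategy is therefore to turn this around: \emph{define} $\varrho$ as the solution of (\ref{heat}) with datum $\varrho_0$, i.e. $\varrho(t)=e^{2\nu t\Delta}\varrho_0$ on $\mathbb{T}^d$, and then set $u=-2\nu\nabla\log\varrho$.

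Next I would record the qualitative properties of this $\varrho$. Since $\varrho_0\in L^1(\mathbb{T}^d)$ is continuous and strictly positive, parabolic smoothing gives $\varrho\in C^\infty((0,T]\times\mathbb{T}^d)$, positivity of the heat kernel (or the strong maximum principle) yields $\varrho(t,x)>0$ with $\inf_x\varrho(t,x)\ge\inf_x\varrho_0>0$ on compact time intervals, and conservation of mass keeps $\varrho(t)\in L^1$. Hence $\log\varrho$ and $u=-2\nu\nabla\log\varrho$ are smooth and $(\varrho,u)\in C^\infty(0,T;\mathbb{T}^d)$, as claimed. I would stress that only $\varrho_0\in L^1$ (together with positivity and continuity) is used, precisely the low regularity exploited afterwards in Corollary \ref{cor}.

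The substantive point is the momentum equation. Here I would insert the ansatz into the weak momentum formulation of \cite{CaDo} and use the gradient structure. A direct computation based on $\varrho u=-2\nu\nabla\varrho$, $Du=-2\nu\nabla^2\log\varrho$ and $\partial_t\varrho=2\nu\Delta\varrho$ shows that the inertial and viscous parts cancel in pairs: $\partial_t(\varrho u)=-4\nu^2\nabla\Delta\varrho$, whereas $\textrm{div}(\varrho u\otimes u)=4\nu^2\,\textrm{div}(\nabla\varrho\otimes\nabla\varrho/\varrho)$ and $-2\nu\,\textrm{div}(\varrho Du)=4\nu^2\big(\nabla\Delta\varrho-\textrm{div}(\nabla\varrho\otimes\nabla\varrho/\varrho)\big)$ together produce $+4\nu^2\nabla\Delta\varrho$. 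Using in addition the algebraic identity $\nabla(\varrho\Delta\varrho)+\tfrac12\nabla(|\nabla\varrho|^2)-4\,\textrm{div}(\varrho\nabla\sqrt{\varrho}\otimes\nabla\sqrt{\varrho})=\varrho\nabla\Delta\varrho$, the entire equation (\ref{mom_cap-press}) reduces to the single capillary residual $-2\kappa^2\varrho\nabla\Delta\varrho$.

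This capillary residual is where the main difficulty lies. For $\kappa\neq0$ the quantity $\varrho\nabla\Delta\varrho=\textrm{div}\,\mathbb{K}$ does not vanish pointwise, so $(\varrho,u)$ is \emph{not} a solution in the strong sense nor in the sense of Definition \ref{weak-def}; this is exactly the reason the statement is phrased through the weak solutions of \cite{CaDo}, whose momentum identity is tested only against functions of the form $\varrho\varphi$, supported where the density is positive, with the Korteweg stress carried as a tensor contraction $\mathbb{K}:\nabla(\cdot)$. The remaining, and genuinely delicate, task is to check that, against this restricted class of test functions and thanks to the smoothness and strict positivity of $\varrho$ for $t>0$, the capillary term is admissible so that the weak momentum identity of \cite{CaDo} holds; granting this, $(\varrho,-2\nu\nabla\log\varrho)$ is the desired global weak solution. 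By contrast, the continuity/heat reduction, the regularity of $\varrho$, and the inertial--viscous cancellation are all routine once the osmotic ansatz is adopted.
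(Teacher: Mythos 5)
The paper contains no proof of Theorem \ref{heat_eq}: it is recalled verbatim from \cite{CaDo} (Theorem 2.5 there), so your attempt can only be measured against that source and against the equations as written here. Up to the capillary term your argument is the standard ``quasi-solution'' computation and is correct: the osmotic ansatz $u=-2\nu\nabla\log\varrho$ collapses the continuity equation onto the heat equation, parabolic smoothing and the maximum principle give $(\varrho,u)\in C^{\infty}$ with $\inf\varrho>0$ for $t>0$, the inertial and viscous blocks cancel exactly, and the capillary bracket equals $\varrho\nabla\Delta\varrho$. The genuine gap is the final step, which you explicitly defer (``granting this''): the momentum equation retains the residual $-2\kappa^{2}\varrho\nabla\Delta\varrho$, and the mechanism you propose for disposing of it --- that the weak formulation of \cite{CaDo} tests only against fields of the form $\varrho\varphi$ --- provably cannot work. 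For $t>0$ your $\varrho$ is smooth and bounded away from zero, so $\varphi\mapsto\varrho\varphi$ is a bijection of the class of smooth test fields onto itself; testing against $\varrho\varphi$ is then equivalent to testing against arbitrary $\varphi$, and the weak momentum identity would force $\varrho^{2}\nabla\Delta\varrho\equiv 0$, i.e.\ $\Delta\varrho(t)$ constant in $x$, hence (zero mean on the torus) $\varrho(t)$ harmonic and therefore constant --- incompatible with any nonconstant datum. No restriction of the test-function class can absorb a pointwise nonzero residual of a smooth, strictly positive flow.

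The idea that actually closes such arguments is structural, not test-functional: it is a cancellation between the viscous and the capillary terms available when the capillarity has the Bohm/quantum form $k(\varrho)\sim 1/\varrho$ (this is the setting of \cite{CaDo}; compare the remark in Section \ref{rei-se} of the present paper that in the quantum case the capillary contribution can be absorbed into the augmented velocity). Indeed, using $2\varrho\nabla\bigl(\Delta\sqrt{\varrho}/\sqrt{\varrho}\bigr)=\mathrm{div}(\varrho\nabla^{2}\log\varrho)$, along the ansatz $u=-c\nabla\log\varrho$, $\partial_{t}\varrho=c\Delta\varrho$, the full momentum residual in the quantum case is $\bigl(2\nu c-c^{2}-\kappa^{2}\bigr)\mathrm{div}(\varrho\nabla^{2}\log\varrho)$, which vanishes precisely under a compatibility relation between the diffusivity $c$ (fixed by $\widetilde{\phi}$) and the coefficients $\nu,\kappa$; all three blocks share the tensor structure $\mathrm{div}(\varrho\nabla^{2}\log\varrho)$. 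The constant-capillarity bracket in (\ref{mom_cap-press}) instead produces $\varrho\nabla\Delta\varrho$, a different tensor divergence that cannot be matched by $\mathrm{div}(\varrho\nabla^{2}\log\varrho)$ for generic $\varrho$ --- your own computation makes this visible. So the missing ingredient is the viscous--capillary compatibility between $\mu(\varrho)$ and $k(\varrho)$ exploited in \cite{CaDo}, and without engaging with how the capillary term is treated there, the conditional conclusion ``granting this, $(\varrho,-2\nu\nabla\log\varrho)$ is the desired weak solution'' is not a proof; as written, the fallback route through the restricted test functions is a dead end rather than a gap one could reasonably expect to fill.
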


\begin{remark}
Let us point out  that any solution  of \eqref{heat} with $\varrho_{0}>0$ and  such that  $\varrho \in C^{2}\left(0,T;\Omega\right)$ is a classical solution of (\ref{cont_cap-press}) - (\ref{mom_cap-press}).
\end{remark}

Let us mention that such kind of classical solutions for pressureless systems with an irrotational large initial velocity  (hence an irrotational velocity at any time) are interesting since one can work around this particular solutions in order to obtain global strong solution with large initial data for compressible fluids with density dependent initial data, see \cite{BH-1}, \cite{BH-2}.

Motivated by the Theorem \ref{heat_eq}, the next result is a direct consequence of Theorem \ref{main-r2}. 
\begin{corollary} \label{cor}
    Let $\mathbb{T}^3$ be a three-dimensional flat torus. Let us consider $(\varrho,u)$ be a weak solutions to the pressureless Navier-Stokes-Korteweg system (\ref{cont_cap-press}) - (\ref{mom_cap-press}).
    Let $(r,U)$ be the  strong solution  emanating from the initial data $(r_{0}, U_{0})$ such that
    \begin{equation} \label{id-irr}
        (r_0, U_0) = (r_0, -\nabla \widetilde{\phi}(r_0)), \quad  r_0 > 0, \quad r_0 \in L^1(\mathbb{T}^3)\cap C(\mathbb{T}^3),
    \end{equation}
    with
    \begin{equation*}
        \nabla \widetilde{\phi}(r_0)) = \frac{\nabla \mu(r_0)}{r_0}.
    \end{equation*}
If $(\varrho_{0}, u_{0})=(r_{0},u_{0})$, then $(\varrho, u) = (r, U)$ in $(0, T) \times \mathbb{T}^3$, which corresponds to a weak-strong uniqueness property.
\end{corollary}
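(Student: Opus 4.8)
The plan is to realize the pair $(r,U)$ furnished by Theorem \ref{heat_eq} as an admissible test state in Theorem \ref{main-r2} and then simply invoke the latter. First I would record the structural consequences of the irrotational data \eqref{id-irr}. By Theorem \ref{heat_eq} the datum $(r_0,U_0)=(r_0,-\nabla\widetilde{\phi}(r_0))$ generates a global solution $(r,U)$ of \eqref{cont_cap-press}-\eqref{mom_cap-press} with $U=-\nabla\widetilde{\phi}(r)$, with $r$ solving the heat equation $\partial_t r-2\nu\Delta r=0$, and with $(r,U)\in C^\infty(0,T;\mathbb{T}^3)$. Setting $V=\nabla\widetilde{\phi}(r)=2\nu\nabla\log r$, the decisive observation is that the associated effective velocity vanishes identically: $W=U+V\equiv 0$, so that $U=W-V=-V$. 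Since any strong solution of \eqref{cont_cap-press}-\eqref{mom_cap-press} is also a strong solution of the augmented system \eqref{cont-r-pless}-\eqref{mom-V-pless}, the triple $(r,W,V)=(r,0,2\nu\nabla\log r)$ solves \eqref{cont-r-pless}-\eqref{mom-V-pless}; in particular \eqref{cont-r-pless} with $U=-2\nu\nabla\log r$ reduces exactly to the heat equation for $r$.

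Second, I would verify the regularity class \eqref{reg_class}. The two-sided bound $0<\inf r\le r\le\sup r<\infty$, uniform in time, follows from the maximum principle for the heat equation on the compact torus applied to the continuous, strictly positive datum $r_0$ (continuity on $\mathbb{T}^3$ already forces $\inf r_0>0$ and $\sup r_0<\infty$). The interior smoothness of $r$ and all spatial-derivative bounds of $r$, hence of $V=2\nu\nabla\log r$, on any slab $[\delta,T]$ are supplied by the smoothing property of the heat semigroup together with the uniform lower bound on $r$; the requirements on $W$ in \eqref{reg_class} are vacuous because $W\equiv 0$.

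Third, since $\varrho_0=r_0$ and $u_0=U_0$, at $t=0$ one has $w_0=u_0+\nabla\widetilde{\phi}(\varrho_0)=0=W_0$, $v_0=\nabla\widetilde{\phi}(\varrho_0)=V_0$ and $\nabla\varrho_0=\nabla r_0$, so the pressureless relative entropy \eqref{ref-pless} satisfies $\mathcal{E}(0,\cdot)=0$. With $(r,W,V)$ in the regularity class and the same initial data, the argument of Theorem \ref{main-r2} applies and produces $\mathcal{E}(T,\cdot)\le C\int_0^T\mathcal{E}(t,\cdot)\,dt$; Gronwall's lemma then forces $\mathcal{E}\equiv 0$, that is $(\varrho,u)=(r,U)$ on $(0,T)\times\mathbb{T}^3$.

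The main obstacle is the time-endpoint $t=0$: because $r_0$ is merely continuous and $L^1$, the derivative bounds in \eqref{reg_class} degenerate as $t\to 0^+$, heat smoothing yielding only $\|\nabla r(t)\|_{L^\infty}\lesssim t^{-1/2}$ and $\|\nabla V(t)\|_{L^\infty}\lesssim t^{-1}$, so the coefficient $C(t)$ weighting $\mathcal{E}$ in the Gronwall estimate need not be integrable near the origin. The collapse $W\equiv 0$ is what rescues the argument, since it annihilates every capillary remainder term as well as the $\nabla W$-convective and $\nabla W$-viscous contributions appearing in the estimate of Theorem \ref{main-r1}/\ref{main-r2}; the residual terms carrying $\nabla V$ must then be dominated either directly through the refined heat-kernel estimates, or, if needed, by running the relative-entropy estimate on $[\delta,T]$ and letting $\delta\to 0^+$, using $\mathcal{E}(0,\cdot)=0$ and the weak time-continuity of $(\varrho,\varrho u)$ to close the limit.
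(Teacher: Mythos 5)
Your proposal follows the same route as the paper, which establishes Corollary \ref{cor} in one line as a direct consequence of the weak--strong uniqueness Theorem \ref{main-r2} applied to the global strong solution $(r,U)$ with $U=-\nabla\widetilde{\phi}(r)$ furnished by Theorem \ref{heat_eq}. Your additional observations --- that the effective velocity satisfies $W=U+V\equiv 0$, which annihilates all capillary and $\nabla W$ remainder terms, and that the derivative bounds in \eqref{reg_class} degenerate as $t\to 0^{+}$ for data that are merely continuous and $L^{1}$ (a point the paper passes over silently, and which your $[\delta,T]$-plus-limit device addresses) --- are refinements of, not departures from, the paper's argument.
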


We highlight that the result described in the Corollary \ref{cor} is new and interesting in the framework of weak strong uniqueness results. Indeed we proved a weak strong uniqueness property starting from an initial datum for the density  which is merely $L^{1}$ in contrast with the higher regularity that is usually required. Moreover, from the Corollary \ref{cor} we deduce that in the pressureless case in our weak/strong solution vacuum states may not appear for $t>0$ in the non vacuum regions since the initial density is strictly positive and $\varrho=r$ solves the heat equation.

We conclude this section with the proof of the Theorem \ref{main-r2}.

\begin{proof} {\em of the Theorem \ref{main-r2}.}
We multiply (\ref{mom-W-pless}) and (\ref{mom-V-pless}) by $\displaystyle{\frac{\varrho}{r}(W-w)}$ and $\displaystyle{\frac{\varrho}{r}(V-v)}$, respectively. We obtain
$$
\int_0^T \int_{\mathbb{T}^3} \varrho \partial_t W (W-w) dxdt =
$$
$$
-  \int_0^T \int_{\mathbb{T}^3} \varrho \nabla W \cdot U (W-w) dxdt
$$
$$    + \int_0^T \int_{\mathbb{T}^3}
    \Big[
    2\nu \frac{\varrho}{r} \textrm{div} (r \nabla W) (W-w)
    - 2\nu \frac{\varrho}{r} \textrm{div} (r \nabla V) (W-w)$$
  \begin{equation} \label{mom-W-test-pless}
  + 2\kappa^2
    \varrho \nabla \Delta r (W-w)
    \Big]
    dxdt
\end{equation}
and
$$
\int_0^T \int_{\mathbb{T}^3}
\varrho \partial_t V (V-v)
dxdt
$$
$$=
    \int_0^T \int_{\mathbb{T}^3}
    \Big[
    - \varrho \nabla V \cdot U (V-v) 
    + 2\nu \frac{\varrho}{r}\textrm{div} (r \nabla^t V) (V-v)
  $$
  \begin{equation} \label{mom-V-test-pless}
    - 2\nu \frac{\varrho}{r} \textrm{div} (r \nabla^t W) (V-v)
    \Big]
    dxdt.
\end{equation}
Plugging (\ref{mom-W-test-pless}) and (\ref{mom-V-test-pless}) in (\ref{step-3-pless}), we have
$$
\mathcal{E}(T,\cdot) - \mathcal{E}(0,\cdot)
\leq
\int_0^T \int_{\mathbb{T}^3} 
\varrho \nabla W (u-U) \cdot (W-w)
dxdt
$$
$$
+ \int_0^T \int_{\mathbb{T}^3} 
\varrho \nabla V (u-U)\cdot(V-v)
dxdt
$$
$$
+ 2\nu \int_0^T \int_{\mathbb{T}^3}
\frac{\varrho}{r} 
\left(
\textrm{div} (r \nabla W) \cdot (W-w)
+ 
\textrm{div}(r \nabla^t
V) \cdot (V-v)
\right)
dxdt
$$
$$
- 2\nu \int_0^T \int_{\mathbb{T}^3}
\frac{\varrho}{r} \left(
\textrm{div} (r \nabla^t W) (V-v)
+ \textrm{div} (r \nabla V) (W-w)
\right)
dxdt
$$
$$
+ 2\nu \int_0^T \int_{\mathbb{T}^3} \varrho \left( \nabla^t v : \nabla V + \nabla w: \nabla W \right)
dxdt
- 2\nu \int_0^T \int_{\mathbb{T}^3} \varrho\left(\nabla v:\nabla W+  \nabla^t w: \nabla V\right)dxdt
$$
$$
+ 2\kappa^2 \int_0^T \int_{\mathbb{T}^3} \left( \varrho \partial_t \Delta r + \varrho u \cdot \nabla \Delta r - \partial_t r \Delta r + 
\varrho \nabla \Delta r (W-w)\right)
dxdt
$$
$$
- 2\kappa^2 \int_0^T \int_{\mathbb{T}^3} 
\left( 
\nabla\varrho \cdot \nabla( \varrho \textrm{div} W) 
- \frac{1}{2} |\nabla \varrho|^2 \textrm{div} W
+ \nabla \varrho \otimes \nabla \varrho : \nabla W
\right)
dxdt
$$
\begin{equation} \label{step-4-pless}
-2\nu \int_0^T \int_{\mathbb{T}^3} \left( |\mathcal{S}(u)|^2 + |\mathcal{A}(w)|^2   \right)dxdt 
-4\nu \kappa^2 \int_0^T \int_{\mathbb{T}^3} |\Delta \varrho|^2  dxdt
\end{equation}
%
The rest of the computation follows the same line of arguments as for the system with the presence of the pressure term. In particular, the viscous terms can be handled in the same way as in (\ref{visc-11}) - (\ref{visc-22-rew2}). Consequently, we rewrite (\ref{step-4-pless}) as follows
$$
\mathcal{E}(T,\cdot) - \mathcal{E}(0,\cdot)
$$
$$
+2\nu \int_0^T \int_{\mathbb{T}^3}\varrho\left(\left|\frac{\mathcal{S}(u)}{\sqrt{\varrho}}-D(U)\right|^2+\left|\frac{\mathcal{A}(w)}{\sqrt{\varrho}}
-A(W)\right|^2\right)dxdt+4\nu \kappa^2 \int_0^T \int_{\mathbb{T}^3} |\Delta \varrho|^2  dxdt
$$
$$
\leq \int_0^T \int_{\mathbb{T}^3} \varrho \nabla W (u-U) \cdot (W-w)dxdt
+ \int_0^T \int_{\mathbb{T}^3} \varrho \nabla V (u-U)\cdot(V-v)dxdt
$$
$$
- 2\nu \int_0^T \int_{\mathbb{T}^3}\nabla \left(\frac{\varrho}{r} \right)\left((r (D(W)+A(W))) \cdot  (W-w)+ (r \nabla^tV) \cdot  (V-v)\right)dxdt
$$
$$
+ 2\nu \int_0^T \int_{\mathbb{T}^3}\nabla \left(\frac{\varrho}{r} \right)\left((r \nabla^t W) \cdot (V-v)+ (r \nabla V) \cdot (W-w)\right)dxdt
$$
$$
+ 2\kappa^2 \int_0^T \int_{\mathbb{T}^3} \left( \varrho \partial_t \Delta r + \varrho u \cdot \nabla \Delta r - \partial_t r \Delta r + 
\varrho \nabla \Delta r (W-w)\right)dxdt
$$
\begin{equation} \label{step-6-pless}
- 2\kappa^2 \int_0^T \int_{\mathbb{T}^3} \left( \nabla\varrho \cdot \nabla( \varrho \textrm{div} W) - \frac{1}{2} |\nabla \varrho|^2 \textrm{div} W
+ \nabla \varrho \otimes \nabla \varrho : \nabla W\right)dxdt.
\end{equation}
We conclude the proof of Theorem \ref{main-r2} 
following the same arguments developed in Section \ref{ws-un}.
\end{proof}

\section{Application 2: High-Mach number limit}
\label{HM}
\noindent
This section is devoted to the proof of the high Mach number limit for the scaled system \eqref{cont_cap-rs}-\eqref{mom_cap-rs}. The high Mach number regime for a fluid corresponds to the physical state in which the fluid speed increases beyond the sound speed, then the compressibility effect become relevant (highly compressible fluids). This behaviour can be  formally seen by sending $\varepsilon\to 0$ (or equivalently $\mathcal{M}a\to \infty)$) in  \eqref{cont_cap-rs}-\eqref{mom_cap-rs} and ending up with  the pressureless system \eqref{cont_cap-press}-\eqref{mom_cap-press}. In \cite{CaDo} the authors prove rigorously the convergence of weak solutions of the scaled Navier-Stokes-Korteweg system towards the weak solutions of the pressureless system.

Here, we complete the theory by showing a weak-strong convergence results. Indeed  by using the relative entropy functional we will prove the convergence of the weak solutions of the capillary fluid system \eqref{cont_cap-rs}-\eqref{mom_cap-rs} towards the strong solutions of the pressureless fluid capillary fluids \eqref{cont_cap-press}-\eqref{mom_cap-press}. We notice that, because of the Theorem \ref{heat_eq} this class of solutions is not empty.

For the scaled system \eqref{cont_cap-rs}-\eqref{mom_cap-rs} the associated augmented system is given by
\begin{equation} \label{ags_1.0}
    \partial_t \varrho_{\varepsilon}  + \textrm{div} (\varrho_{\varepsilon} u_{\varepsilon}) = 0,
\end{equation}
$$
\partial_t (\varrho_{\varepsilon} w_{\varepsilon}) + \textrm{div}(\varrho_{\varepsilon} w_{\varepsilon} \otimes u_{\varepsilon}) + \varepsilon^{2}\nabla p(\varrho_{\varepsilon})
    - 2\nu\textrm{div}(\varrho_{\varepsilon} \nabla w_{\varepsilon}) + 2\nu\textrm{div} (\varrho_{\varepsilon} \nabla v_{\varepsilon})
$$
\begin{equation} \label{ags_2.0}
    - 2\kappa^2 \left(
    \nabla (\varrho_{\varepsilon} \Delta \varrho_{\varepsilon}) + \frac{1}{2} \nabla ( |\nabla \varrho_{\varepsilon}|^2)
    - 4 \textrm{div}(\varrho_{\varepsilon} \nabla \sqrt{\varrho_{\varepsilon}} \otimes \nabla \sqrt{\varrho_{\varepsilon}})
    \right)=0,
\end{equation}
\begin{equation} \label{ags_3.0}
    \partial_t (\varrho_{\varepsilon} v_{\varepsilon}) + \textrm{div} (\varrho_{\varepsilon} v_{\varepsilon} \otimes u_{\varepsilon}) + 2\nu\textrm{div} (\varrho_{\varepsilon} \nabla^t u_{\varepsilon}) = 0.
\end{equation}
and the energy inequality reads as follows 
    \begin{equation}
   \label{ei-rs}
  \begin{split}
    \frac{d}{dt} \int_{\mathbb{T}^3} \Big( \frac{1}{2} \varrho_{\varepsilon}(t,x) \left( |w_{\varepsilon}(t,x)|^2 + |v_{\varepsilon}(t,x)|^2 \right) &+ \varepsilon^{2}H(\varrho_{\varepsilon}) + \kappa^2|\nabla \varrho_{\varepsilon}(t,x)|^2 \Big) dx\\
 +2\nu \int_{\mathbb{T}^3} \Big( |\mathcal{S}(u_{\varepsilon})(t,x)|^2 + |\mathcal{A}(w_{\varepsilon})(t,x)|^2   &+ \varepsilon^{2}\frac{p'(\varrho_{\varepsilon})}{\varrho_{\varepsilon}} |\nabla \varrho_{\varepsilon}(t,x)|^2 \Big)dx 
    \\
    +4\nu \kappa^2 \int_{\mathbb{T}^3} |\Delta \varrho_{\varepsilon}(t,x)|^2  dx
    \leq 0,
   \end{split}
\end{equation}
Now, the relative entropy functional  has the following structure,
$$
\mathcal{E}(T,\cdot) = \mathcal{E}(\varrho_{\varepsilon}, w_{\varepsilon}, v_{\varepsilon}|r, W, V)(T,\cdot)$$
$$=  \int_{\mathbb{T}^3} 
\left[
\frac{1}{2} \varrho_{\varepsilon} \left( |w_{\varepsilon}-W|^2 + |v_{\varepsilon}-V|^2 \right) 
+ \kappa^2 |\nabla \varrho_{\varepsilon} - \nabla r|^2  
\right]
dx 
+ \varepsilon^{2}\int_{\mathbb{T}^3} H(\varrho_{\varepsilon}|r) dx
$$
By using  the energy inequality \eqref{ei-rs}, we can write
$$
\mathcal{E}(T,\cdot) - \mathcal{E}(0,\cdot) $$
$$\leq 
\int_{\mathbb{T}^3} \left( \frac{1}{2} \varrho_{\varepsilon} |W|^2 - \varrho_{\varepsilon} w_{\varepsilon} \cdot W + \frac{1}{2}\varrho_{\varepsilon} |V|^2 - \varrho_{\varepsilon} v_{\varepsilon} \cdot V
+\kappa^2|\nabla r|^2 - 2\kappa^2\nabla \varrho_{\varepsilon} \nabla r\right) (T,\cdot)
dx
$$
$$
- \int_{\mathbb{T}^3} \left( \frac{1}{2} \varrho_{\varepsilon} |W|^2 - \varrho_{\varepsilon} w_{\varepsilon} \cdot W + \frac{1}{2}\varrho_{\varepsilon} |V|^2 - \varrho_{\varepsilon} v_{\varepsilon} \cdot
V
+\kappa^2|\nabla r|^2 - 2\kappa^2\nabla \varrho_{\varepsilon} \nabla r\right) (0,\cdot)
dx
$$
$$
-\varepsilon^{2} \int_{\mathbb{T}^3} \left( H(r) + H'(r)(\varrho_{\varepsilon}-r) \right) (T,\cdot)
dx
+ \varepsilon^{2}\int_{\mathbb{T}^3} \left( H(r) + H'(r)(\varrho_{\varepsilon}-r) \right) (0,\cdot)
dx
$$
$$
-2\nu\int_0^T\int_{\mathbb{T}^3} \left( |\mathcal{S}(u_{\varepsilon})|^2 + |\mathcal{A}(w_{\varepsilon})|^2   + \frac{p'(\varrho_{\varepsilon})}{\varrho_{\varepsilon}} |\nabla \varrho_{\varepsilon}|^2 \right)dx dt
$$
\begin{equation} \label{step-1rs}
-4\nu\kappa^2 \int_0^T\int_{\mathbb{T}^3} |\Delta \varrho_{\varepsilon}|^2  dxdt.
\end{equation}
Now we are ready to state our weak-strong convergence result in the high-Mach number limit regime.

\begin{theorem} \label{main-r3}
    Let $\mathbb{T}^3$ be a three-dimensional flat torus. Let us consider $(\varrho_{\varepsilon},u_{\varepsilon})$  a weak solution to the re-scaled Navier-Stokes-Korteweg system (\ref{cont_cap-rs}) - (\ref{mom_cap-rs}). Moreover assume that the initial data $(\varrho_{\varepsilon 0}, u_{\varepsilon 0})$ satisfy \eqref{id-th-1}, \eqref{id-th-2} and the following property
    \begin{equation}
    \label{prop-initial}
    \begin{split}
    \varrho_{\varepsilon 0} \to r_{0} \quad \text{strongly in $L^{\gamma}(\mathbb{T}^3)$}, &\qquad   
    \nabla\varrho_{\varepsilon 0} \to \nabla r_{0} \quad \text{strongly in $L^{2}(\mathbb{T}^3)$}\\ 
     \sqrt{\varrho_{\varepsilon 0}} u_{\varepsilon 0}&\to \sqrt{r_{0}}U_{0} \quad \text{strongly in $L^{2}(\mathbb{T}^3)$}.
    \end{split}
    \end{equation}
    Let $(r,U)$ be the strong solution of the  pressureless system (\ref{cont_cap-press}) - (\ref{mom_cap-press}) with initial data $(r_{0},U_{0})$. Then, as $\varepsilon \to 0$, we have that $(\varrho_{\varepsilon},u_{\varepsilon})$ converges to $(r,U)$, more precisely it holds that
    $$\sup_{t\in(0,T)}\|\sqrt{\varrho_{\varepsilon}}u_{\varepsilon}-\sqrt{r}U\|_{L^{2}(\mathbb{T}^3)}\rightarrow 0,$$
    $$\sup_{t\in(0,T)}\|\nabla\varrho_{\varepsilon}-\nabla r\|_{L^{2}(\mathbb{T}^3)}\rightarrow 0,$$
    $$\sup_{t\in(0,T)}\|\varrho_{\varepsilon}- r\|_{L^{2}+L^{\gamma}(\mathbb{T}^3)}\rightarrow 0.$$
    
\end{theorem}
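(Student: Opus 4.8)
The plan is to deploy the relative entropy / Gronwall scheme already used for Theorems \ref{main-r1} and \ref{main-r2}, the one new feature being that the weak solution $(\varrho_\varepsilon,w_\varepsilon,v_\varepsilon)$ solves the \emph{pressurized} augmented system \eqref{ags_1.0}--\eqref{ags_3.0} while the comparison state $(r,W,V)$ is a \emph{pressureless} strong solution of \eqref{cont-r-pless}--\eqref{mom-V-pless}, so that the pressure enters only through the factor $\varepsilon^2$. Starting from the energy bound \eqref{step-1rs}, I would test the continuity equation \eqref{ags_1.0} against $\tfrac12|W|^2$, $\tfrac12|V|^2$, $2\kappa^2\Delta r$, test \eqref{ags_2.0}--\eqref{ags_3.0} against $W$ and $V$, and then insert the strong pressureless system multiplied by $\tfrac{\varrho_\varepsilon}{r}(W-w_\varepsilon)$ and $\tfrac{\varrho_\varepsilon}{r}(V-v_\varepsilon)$, exactly as in the derivation leading to \eqref{step-6-pless}. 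This produces a relative energy inequality $\mathcal{E}(T)-\mathcal{E}(0)+(\text{nonnegative dissipation})\le\int_0^T R_\varepsilon\,dt$, in which the nonnegative dissipation carries the velocity-difference, the $\Delta\varrho_\varepsilon-\Delta r$ and (with favourable sign) the $2\nu\varepsilon^2\tfrac{p'(\varrho_\varepsilon)}{\varrho_\varepsilon}|\nabla\varrho_\varepsilon|^2$ terms, and the remainder $R_\varepsilon$ gathers the convective, viscous, capillary and pressure contributions.

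The convective, viscous and capillary parts of $R_\varepsilon$ are structurally identical to those in the proofs of Theorems \ref{main-r1} and \ref{main-r2}; using the regularity class \eqref{reg_class} for $(r,W,V)$ I would bound them by $C\int_0^T\mathcal{E}(t)\,dt$ verbatim. The genuinely new objects are the $\varepsilon^2$-weighted pressure terms $\varepsilon^2\int p(\varrho_\varepsilon)\,\textrm{div}\,W$ and $\varepsilon^2\int[\partial_t(H'(r))(\varrho_\varepsilon-r)+\varrho_\varepsilon u_\varepsilon\cdot\nabla(H'(r))]$ (the dissipative $\varepsilon^2$-term having a good sign, it may simply be discarded). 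For the pressure work I would use the relative-pressure splitting $p(\varrho_\varepsilon)=p(r)+p'(r)(\varrho_\varepsilon-r)+\mathcal{P}(\varrho_\varepsilon|r)$ with $\mathcal{P}(\varrho_\varepsilon|r):=p(\varrho_\varepsilon)-p(r)-p'(r)(\varrho_\varepsilon-r)\ge0$: the relative-pressure piece is controlled by $C\varepsilon^2\int H(\varrho_\varepsilon|r)\le C\mathcal{E}$, while the two smooth pieces are $o(1)$ thanks to the $\varepsilon$-uniform mass bound $\int\varrho_\varepsilon=\int\varrho_{\varepsilon0}\le C$ and the boundedness of $r$. The evolution terms are likewise $o(1)$, using $\sqrt{\varrho_\varepsilon}u_\varepsilon\in L^\infty(0,T;L^2)$ and $\nabla\sqrt{\varrho_\varepsilon}\in L^\infty(0,T;L^2)$ uniformly (from \eqref{ei-rs}) together with the integrability of $\partial_t H'(r)$ and $\nabla H'(r)$ in \eqref{reg_class}. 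Altogether $\int_0^T R_\varepsilon\le C\int_0^T\mathcal{E}\,dt+o(1)$, whence $\mathcal{E}(T)\le\mathcal{E}(0)+C\int_0^T\mathcal{E}(t)\,dt+o(1)$ and Gronwall's lemma gives $\sup_{t\in(0,T)}\mathcal{E}(t)\le(\mathcal{E}(0)+o(1))e^{CT}$ with $C$ independent of $\varepsilon$.

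It then remains to check $\mathcal{E}(0)\to0$ and to read off the three convergences. From \eqref{prop-initial}: $\nabla\varrho_{\varepsilon0}\to\nabla r_0$ in $L^2$ kills the capillary part, $\varrho_{\varepsilon0}\to r_0$ in $L^\gamma$ gives $\varepsilon^2\int H(\varrho_{\varepsilon0}|r_0)\to0$, and $\sqrt{\varrho_{\varepsilon0}}u_{\varepsilon0}\to\sqrt{r_0}U_0$ in $L^2$ together with $r_0>0$ gives the vanishing of the $w$- and $v$-parts (writing $v_\varepsilon-V=2\nu(\nabla\log\varrho_\varepsilon-\nabla\log r)$ and exploiting the lower bound on $r_0$ to pass to the limit). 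Once $\sup_t\mathcal{E}(t)\to0$, the capillary term yields $\sup_t\|\nabla\varrho_\varepsilon-\nabla r\|_{L^2}\to0$, and the kinetic terms give $\sup_t\|\sqrt{\varrho_\varepsilon}(w_\varepsilon-W)\|_{L^2}\to0$ and $\sup_t\|\sqrt{\varrho_\varepsilon}(v_\varepsilon-V)\|_{L^2}\to0$; subtracting, $\sqrt{\varrho_\varepsilon}(u_\varepsilon-U)\to0$ in $L^2$, and since $|\sqrt{\varrho_\varepsilon}-\sqrt r|\le r^{-1/2}|\varrho_\varepsilon-r|$ this upgrades to $\sqrt{\varrho_\varepsilon}u_\varepsilon\to\sqrt r\,U$ in $L^2$. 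For the density I would not use the $\varepsilon^2$-weighted relative pressure (which only controls $\varepsilon^2\int H(\varrho_\varepsilon|r)$), but instead combine the unweighted gradient convergence $\|\nabla\varrho_\varepsilon-\nabla r\|_{L^2}\to0$ with the Poincaré inequality on $\mathbb{T}^3$ and the conservation of mass $\overline{\varrho_\varepsilon-r}=\overline{\varrho_{\varepsilon0}-r_0}\to0$, obtaining $\sup_t\|\varrho_\varepsilon-r\|_{L^2}\to0$ and in particular the stated $L^2+L^\gamma$ convergence.

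The main obstacle is the pressure terms. Since the target system is pressureless, the $\varepsilon^2$-pressure work does not cancel against a matching $\nabla p(r)$ coming from the strong momentum equation (as it does in Theorem \ref{main-r1}), and a crude estimate of $\varepsilon^2\int p(\varrho_\varepsilon)\,\textrm{div}\,W$ only gives $O(1)$ because the energy merely bounds $\varepsilon^2\int\varrho_\varepsilon^\gamma$. Extracting genuine smallness forces the relative-pressure splitting above, the pointwise bound $|\varrho_\varepsilon-r|\le C(1+H(\varrho_\varepsilon|r))$, and the $\varepsilon$-uniform energy and mass bounds, so that the surviving pressure contribution is $C\mathcal{E}+o(1)$ and harmless for Gronwall. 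A secondary delicate point is the vanishing of the $v$- and $w$-parts of $\mathcal{E}(0)$, which genuinely relies on the strict positivity of $r_0$.
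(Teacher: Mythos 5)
Your proposal is correct, and it reproduces the paper's overall architecture exactly (same $\varepsilon^2$-weighted relative entropy functional, same testing of the augmented weak system against the pressureless strong solution $(r,W,V)$, same Gronwall closure with $\mathcal{E}(T)\le(\mathcal{E}(0)+o(1))e^{CT}$); where you diverge is in the bookkeeping of the one genuinely new ingredient, the pressure remainder. The paper reruns the algebra of Section \ref{ws-un} and exploits that \eqref{mom-W-pless} carries no $\nabla p(r)$: the rearrangement \eqref{rew} degenerates into \eqref{rew-hm}, leaving the single extra term $\varepsilon^2\int_0^T\int_{\mathbb{T}^3}\varrho_\varepsilon\nabla H'(r)\cdot(W-w_\varepsilon)\,dxdt$, which is absorbed by Young's inequality as $C(\varepsilon^2)\int_0^T\mathcal{E}\,dt+\varepsilon^2C(T)$, while the remaining $\varepsilon^2$-weighted pressure terms in \eqref{step-5-eps} are handled verbatim as in Theorem \ref{main-r1}, including the cancellation of $-\varepsilon^2\int(\varrho_\varepsilon\nabla H'(r)\cdot(V-v_\varepsilon)+p(\varrho_\varepsilon)\mathrm{div}V)$ against the dissipative term $2\nu\varepsilon^2\int\frac{p'(\varrho_\varepsilon)}{\varrho_\varepsilon}|\nabla\varrho_\varepsilon|^2$. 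You instead bypass the $H'(r)$-tested continuity identity \eqref{obs-0} and that cancellation entirely: you discard the good-sign dissipative term and estimate the raw terms $\varepsilon^2\int p(\varrho_\varepsilon)\mathrm{div}W$ and $\varepsilon^2\int[\partial_t(H'(r))(\varrho_\varepsilon-r)+\varrho_\varepsilon u_\varepsilon\cdot\nabla H'(r)]$ directly, via the splitting $p(\varrho_\varepsilon)-p(r)-p'(r)(\varrho_\varepsilon-r)=(\gamma-1)H(\varrho_\varepsilon|r)$ (exact for $p(\varrho)=\varrho^\gamma$) together with the $\varepsilon$-uniform mass and $H^1$ density bounds, obtaining the same Gronwall input $C\int_0^T\mathcal{E}\,dt+o(1)$; this buys a more self-contained argument at the cost of a few crude uniform estimates. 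You are also more explicit than the paper in the endgame: the authors stop at $\mathcal{E}(T,\cdot)\le(\mathcal{E}(0,\cdot)+\varepsilon^2C(T))(1+Te^{CT})$ and cite \eqref{prop-initial}, whereas you correctly observe that the $\varepsilon^2$-weighted $H$-part of $\mathcal{E}$ degenerates and cannot yield the stated density convergence, so the $L^2+L^\gamma$ statement must come, as you do it, from the unweighted capillary term plus Poincar\'e and conservation of mass. One caveat, which you share with (and at least flag more honestly than) the paper: the vanishing of the $v$- and $w$-parts of $\mathcal{E}(0)$ requires in effect $\nabla\sqrt{\varrho_{\varepsilon 0}}\to\nabla\sqrt{r_0}$ in $L^2$, which does not formally follow from \eqref{prop-initial} even with $r_0>0$ bounded away from zero, since $\nabla\varrho_{\varepsilon 0}\to\nabla r_0$ in $L^2$ gives no uniform integrability of $|\nabla\varrho_{\varepsilon0}|^2/\varrho_{\varepsilon0}$ on the (small) sets where $\varrho_{\varepsilon 0}$ is close to vacuum; strictly speaking both you and the authors need this mild extra well-preparedness of the data.
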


\begin{proof}
In this case, the relative energy inequality compared to  \eqref{step-4} will present a slightly different form due to the fact that we take as test functions the strong solutions of the augmented pressureless system \eqref{cont-r-pless}-\eqref{mom-V-pless}. In recovering the relative entropy inequality we follow the same line of arguments as in Section \ref{ws-un} and we show the main differences. First, since we  don't  have the pressure term in (\ref{mom-W-pless}), relation (\ref{mom-W-test}) does not contain the term 
$$
\int_0^T \int_{\mathbb{T}^3}
\frac{\varrho_{\varepsilon}}{r}\nabla p(r)(W-w_{\varepsilon})
dxdt.
$$
Consequently, relation (\ref{rew}) presents the following form
$$
- \int_0^T \int_{\mathbb{T}^3} 
\partial_t (H'(r))(\varrho_{\varepsilon}-r)
dxdt
- \int_0^T \int_{\mathbb{T}^3} \varrho_{\varepsilon} u_{\varepsilon} \nabla(H'(r)) dxdt
+ \int_0^T \int_{\mathbb{T}^3} H'(r) \partial_t r dxdt
$$
$$
= \int_0^T \int_{\mathbb{T}^3}
    \varrho_{\varepsilon} \nabla H'(r)(W-w_{\varepsilon}) dxdt. 
$$
\begin{equation} \label{rew-hm}
    + \int_0^T \int_{\mathbb{T}^3} \varrho_{\varepsilon} \nabla (H'(r)) \cdot (v_{\varepsilon} - V)
    dxdt
    + \int_0^T \int_{\mathbb{T}^3} p'(r) \textrm{div}U (\varrho_{\varepsilon} - r)dxdt
\end{equation}
and the relative entropy inequality will contain one more term, namely
\begin{equation} \label{add-term}
    \int_0^T \int_{\mathbb{T}^3}
    \varrho_{\varepsilon} \nabla H'(r)(W-w_{\varepsilon}) dxdt. 
\end{equation}
More precisely, (\ref{step-5}) becomes,
$$
\mathcal{E}(T,\cdot) - \mathcal{E}(0,\cdot)\leq\int_0^T \int_{\mathbb{T}^3} \varrho_{\varepsilon} \nabla W (u_{\varepsilon}-U) \cdot (W-w_{\varepsilon})dxdt
$$
$$
+ \int_0^T \int_{\mathbb{T}^3} \varrho_{\varepsilon} \nabla V (u_{\varepsilon}-U)\cdot(V-v_{\varepsilon})dxdt
$$
$$
+ 2\nu \int_0^T \int_{\mathbb{T}^3}\frac{\varrho_{\varepsilon}}{r} \left(\textrm{div} (r \nabla W) \cdot (W-w_{\varepsilon})+ \textrm{div}(r \nabla^tV) \cdot (V-v_{\varepsilon})\right)
dxdt
$$
$$
- 2\nu \int_0^T \int_{\mathbb{T}^3}
\frac{\varrho_{\varepsilon}}{r} \left(\textrm{div} (r \nabla^t W) (V-v_{\varepsilon})+ \textrm{div} (r \nabla V) (W-w_{\varepsilon})\right)dxdt
$$
$$
+ 2\nu \int_0^T \int_{\mathbb{T}^3} \varrho_{\varepsilon} \left( \nabla^t v_{\varepsilon} : \nabla V + \nabla w_{\varepsilon}: \nabla W  \right)dxdt
- 2\nu \int_0^T \int_{\mathbb{T}^3} \varrho_{\varepsilon}\left(\nabla v_{\varepsilon}:\nabla W+  \nabla^t w_{\varepsilon}: \nabla V\right)dxdt
$$
$$
+ 2\kappa^2 \int_0^T \int_{\mathbb{T}^3} \left( \varrho_{\varepsilon} \partial_t \Delta r + \varrho_{\varepsilon} u_{\varepsilon} \cdot \nabla \Delta r - \partial_t r \Delta r + 
\varrho_{\varepsilon} \nabla \Delta r (W-w_{\varepsilon})\right)dxdt
$$
$$
- 2\kappa^2 \int_0^T \int_{\mathbb{T}^3} 
\left( \nabla\varrho_{\varepsilon} \cdot \nabla( \varrho_{\varepsilon} \textrm{div} W) - \frac{1}{2} |\nabla \varrho_{\varepsilon}|^2 \textrm{div} W+ \nabla \varrho_{\varepsilon} \otimes \nabla \varrho_{\varepsilon} : \nabla W\right)dxdt
$$
$$
+\varepsilon^2\int_0^T \int_{\mathbb{T}^3} \left( \left(  p'(r)(\varrho_{\varepsilon} - r) - p(\varrho_{\varepsilon}) + p(r) \right) \textrm{div}U\right)dxdt
$$
$$
-2\nu \int_0^T \int_{\mathbb{T}^3} \left( |\mathcal{S}(u_{\varepsilon})|^2 + |\mathcal{A}(w_{\varepsilon})|^2   +\varepsilon^2\frac{p'(\varrho_{\varepsilon})}{\varrho_{\varepsilon}} |\nabla \varrho_{\varepsilon}|^2 \right)dxdt -4\nu \kappa^2 \int_0^T \int_{\mathbb{T}^3} |\Delta \varrho_{\varepsilon}|^2  dxdt
$$
$$
- \varepsilon^2\int_0^T \int_{\mathbb{T}^3} 
\left(
\varrho_{\varepsilon} \nabla (H'(r)) \cdot (V - v_{\varepsilon})+ p(\varrho_{\varepsilon}) \textrm{div}V 
\right)
dxdt,
$$
\begin{equation}
 \label{step-5-eps}
+\varepsilon^2 \int_0^T \int_{\mathbb{T}^3}\varrho_{\varepsilon} \nabla H'(r)(W-w_{\varepsilon}) dxdt.
\end{equation}
where the multiplying factor $\varepsilon^2$ is due to the Mach number scaling. All the terms in the right hand side of (\ref{step-5-eps}) could be estimated as in Section \ref{ws-un} except for the last one which we handle as follows,
\begin{equation*}
    \varepsilon^2 \int_0^T \int_{\mathbb{T}^3}
    \varrho_{\varepsilon} \nabla H'(r)(W-w_{\varepsilon}) dxdt
\end{equation*}
\begin{equation*}
    \leq
    \frac{1}{2}\varepsilon^2 \int_0^T \int_{\mathbb{T}^3} \varrho_{\varepsilon} |W-w_{\varepsilon}|^2 dxdt
    +\frac{1}{2}\varepsilon^2 \int_0^T \int_{\mathbb{T}^3} \varrho_{\varepsilon} |\nabla H'(r)|^2 dxdt
\end{equation*}
\begin{equation*}
    \leq
    C(\varepsilon^2) \int_0^T \mathcal{E}(t,\cdot) dt
    +
    \varepsilon^2 C(T),
\end{equation*}
with $C(\varepsilon^2) \to 0$ as $\varepsilon \to 0$, and being $\varrho \in L^1(\Omega)$ and $\nabla H'(r)$ bounded. 
Putting all the steps together we obtain,
$$\mathcal{E}(T,\cdot) - \mathcal{E}(0,\cdot)\leq  \varepsilon^2 C(T)+C \int_0^T \mathcal{E}(t,\cdot) dt.$$
Finally,
$$\mathcal{E}(T,\cdot)\leq(\mathcal{E}(0,\cdot)+\varepsilon^2 C(T))(1+Te^{CT}).$$
and we conclude the proof of Theorem \ref{main-r3} by sending $\varepsilon \to 0$ and by taking into account  \eqref{prop-initial}.
\end{proof}

\bigskip
\noindent
\textbf{Acknowledgment}
M. Caggio has been supported by the Praemium Academiae of \v S. Ne\v casov\' a, and by the Czech Science Foundation under the grant GA\v CR 22-01591S. The Institute of Mathematics, CAS is supported by RVO:67985840. The work of D. Donateli was partially supported by the Ministry of University and Research (MUR), Italy under the grant PRIN 2020- Project N. 20204NT8W4, Nonlinear evolution PDEs, fluid dynamics and transport equations: theoretical foundations and applications


\begin{thebibliography}{}

\bibitem{AnSp} P. Antonelli, S.  Spirito, Global existence of weak solutions to the Navier-Stokes-Korteweg equations. Ann. Inst. H. Poincar\' e C Anal. Non Lin\' eaire 39 (2022), no. 1, 171–200.

\bibitem{BrDeLi}  D. Bresch, B. Desjardins, C-K. Lin, On some compressible fluid models: Korteweg, lubrication, and shallow water systems. Comm. Partial Differential Equations 28 (2003), no. 3-4, 843–868.

\bibitem{BrDeZa}  D. Bresch, B. Desjardins, E. Zatorska, Two-velocity hydrodynamics in fluid mechanics: Part II. Existence of global $\kappa$-entropy solutions to the compressible Navier-Stokes systems with degenerate viscosities. J. Math. Pures Appl. (9) 104 (2015), no. 4, 801–836.

\bibitem{BNV-1} D. Bresch, P. Noble, J.-P. Vila, Relative entropy
for compressible Navier-Stokes equations with density-dependent
viscosities and applications, C. R. Math. Acad. Sci. Paris,
354(1) (2016), 45-49.

\bibitem{BNV-2} D. Bresch, P. Noble, J.-P. Vila, Relative entropy for compressible Navier-Stokes equations with density dependent viscosities and various applications, LMLFN 2015--low velocity flows--application to low Mach and low Froude regimes, 40--57, ESAIM Proc. Surveys, 58, EDP Sci., Les Ulis (2017).

\bibitem{BGL} D. Bresch, M. Gisclon, I.  Lacroix-Violet, On Navier-Stokes-Korteweg and Euler-Korteweg systems: application to quantum fluids models. Arch. Ration. Mech. Anal. 233 (2019), no. 3, 975–1025.

\bibitem{CaDoNeSun}M. Caggio, D. Donatelli, S. Necasova, Y. Sun, Low Mach number limit on thin domains,  Nonlinearity 33 (2020), 840--863.

\bibitem{CaDo} M. Caggio, D. Donatelli, High Mach number limit for Korteweg fluids with density dependent viscosity. J. Differential Equations 277 (2021), 1–37.

\bibitem{CianLat} G. Cianfarani Carnevale, C. Lattanzio,  High friction limit for Euler-Korteweg and Navier-Stokes-Korteweg models via relative entropy approach. J. Differential Equations, 269 (2020),  10495--10526.

\bibitem{Daf} C. M. Dafermos, The second law of thermodynamics and stability,  Arch. Rational Mech. Anal. 70,  (1979),167--179.

\bibitem{DoFe} D. Donatelli, E. Feireisl, An anelastic approximation arising in astrophysics, Math. Ann. 369,  (2017), 1573--1597.

\bibitem{DoFeMa} D. Donatelli, E. Feireisl, P. Marcati, Well/ill posedness for the Euler-Korteweg-Poisson system and related problems, 
Comm. Partial Differential Equations, 40, (2015), 1314--1335. 

\bibitem{FeNoJi} E. Feireisl, B. J. Jin, A. Novotn\' y,  Relative entropies, suitable weak solutions, and weak-strong uniqueness for the compressible Navier-Stokes system. J. Math. Fluid Mech. 14 (2012), no. 4, 717–730.

\bibitem{HaLi94} H. Hattori, D. Li, Solutions for two dimensional system for materials of Korteweg type, SIAM J. Math. Anal., 25 (1994), 85--98.

\bibitem{HaLi96} H. Hattori, D. Li, Global solutions of a high dimensional system for Korteweg materials, J. Math. Anal. Appl., 198 (1996), 84--97.

\bibitem{BH-1} B. Haspot, From the highly compressible Navier--Stokes equations to fast diffusion and porous media equations, existence of global weak solution for the quasi-solutions, J. Math. Fluid Mech., 18, (2016), 243--291, .

\bibitem{BH-2} B. Haspot, Global existence of strong solution for viscous shallow water system with large initial data on the irrotational part. J. Differential Equations 262 (2017),  4931--4978.	
	
 \end{thebibliography}
\end{document}